\documentclass{article}
\usepackage{graphicx} 
\usepackage[T1]{fontenc}
\usepackage{lmodern}

\usepackage[colorlinks=true, hidelinks]{hyperref} 

\usepackage{amsmath}
\usepackage{amssymb}
\usepackage{comment}
\usepackage{cite}
\usepackage{enumerate}
\usepackage{exscale}
\usepackage{mathrsfs}
\usepackage{mathtools}
\usepackage{proof}
\usepackage{pifont}
\usepackage{qtree}
\usepackage{siunitx}
\usepackage{xcolor}
\usepackage{pifont}
\definecolor{1f1e33}{HTML}{1F1E33}
\definecolor{mediumblue}{HTML}{0000CD}

\usepackage{tikz}
\usetikzlibrary{automata, arrows.meta, positioning}

\usepackage[all]{xy}

\usepackage{amsthm}
\theoremstyle{definition}
\newtheorem{definition}{Definition}[section]
\newtheorem{example}[definition]{Example}

\newtheorem{lemma}[definition]{Lemma}
\newtheorem{theorem}[definition]{Theorem}

\newcommand{\Prop}{\mathbf{Prop}}
\newcommand{\Nom}{\mathbf{Nom}}

\newcommand{\Dia}{\Diamond}
\newcommand{\Kmodel}{\mathcal{M}}
\newcommand{\Kframe}{\mathcal{F}}
\newcommand{\tableau}{\mathbf{TAB}}

\newcommand{\hybridK}{\mathbf{K(@)}}

\newcommand{\Drest}{(\mathcal{D})}
\newcommand{\reflex}{\mathit{Ref}}
\newcommand{\irr}{\mathit{Irr}}
\newcommand{\trans}{\mathit{Trs}}
\newcommand{\ser}{\mathit{Ser}}
\newcommand{\antisym}{\mathit{A\text{-}sym}}
\newcommand{\trich}{\mathit{Tri}}
\newcommand{\total}{\mathit{Tot}}

\newcommand{\dom}{\mathrm{dom}}
\newcommand{\reflexivity}{\mathit{Eq}}
\newcommand{\Id}{\mathit{Id}}

\newcommand{\tableauIF}{\mathbf{TAB}_{\mathbf{I4}}}
\newcommand{\tableauIFD}{\mathbf{TAB}_{\mathbf{I4D}}}
\newcommand{\tableauPO}{\mathbf{TAB}_{\textbf{PO}}}
\newcommand{\tableauIFT}{\mathbf{TAB}_{\textbf{STO}}}
\newcommand{\tableauTO}{\mathbf{TAB}_{\textbf{TO}}}

\title{Terminating Hybrid Tableaus for Ordered Models}
\author{Yuki Nishimura}
\date{ }

\begin{document}

\maketitle

\begin{abstract}
    Hybrid logic extends modal logic with special propositions called nominals, each of which is true at only one state in a model. This enables us to describe some properties of binary relations, such as irreflexivity and anti-symmetry, which are essential to treat partial orders. We present terminating tableau calculi complete with respect to some classes of models whose accessibility relations are strictly partially ordered, unbounded strictly partially ordered, partially ordered, strictly totally ordered, and totally ordered.
\end{abstract}

{ \small
\begin{description}
    \item[Keywords:] Modal logic, hybrid logic, tableau calculus, decision procedure, bulldozing, partial order, total order.
\end{description}
}

\section{Introduction}

\subsection{Background and Results}

How does time flow? Prior formalized the tense in natural language using modal logic, which he named \emph{tense logic} \cite{prior1957}. Since then, logics dealing with time have been broadly studied under the name of \emph{temporal logic}. One feature of temporal logic is that transitivity is often imposed on the structure. Indeed, Goldblatt's textbook \cite{goldblatt1992} states: ``But it is natural also to require a temporal ordering to be transitive.'' In addition, temporal logics, such as LTL and CTL, handle ordered structures in reasoning about time (see also \cite{demri2016, SEPtemporal}).

Blackburn \cite{blackburn1993} extended Prior's tense logic by introducing \emph{nominal tense logic}, which incorporates \emph{nominals}---propositional variables that are true in exactly one state. This addition allows for defining properties of binary relations, such as irreflexivity, asymmetry, and anti-symmetry, which cannot be expressed in basic modal logic. The paper introduced axioms for these properties and proved the completeness and decidability of logics associated with various order structures, including partial and total orders. Notably, the logic $\mathbf{I4D}$, corresponding to models with unbounded strict partial orders, remains decidable despite lacking the finite frame property. Blackburn later named this framework \emph{hybrid logic}, which has since been extensively studied. For more details on hybrid logic, see \cite{blackburn2000, blackburn2006P, areces2007_14, indrzejczak2007, brauner2011}.

In this paper, we construct tableau calculi $\tableauIF$, $\tableauIFD$, $\tableauPO$, $\tableauIFT$, and $\tableauTO$ for models with strict partial orders, unbounded strict partial orders, partial orders, strict total orders, and total orders, respectively, and demonstrate their completeness and termination.
These five systems correspond to the logics $\mathbf{I4}$, $\mathbf{I4D}$, and $\mathbf{PO}$, $\mathbf{LIN}_s$, and $\mathbf{LIN}$ in \cite{blackburn1993}, respectively.

The key ingredient of our proof is the method called \emph{bulldozing}, which destroys \emph{undesirable states} in the model and transforms them into a desirable form. It is important to note that bulldozing transitive models can result in the construction of infinite models. However, even if the counterexample model is infinite, the existence of such a model can be demonstrated through a finite procedure. From this, we can provide a distinct proof of the decidability of the logics, as in \cite{blackburn1993}.

\subsection{Related Works}

Here, we review the previous study of tableau calculi for hybrid logic.

A tableau calculus for hybrid logic was first proposed by Tzakova \cite{tzakova1999} as a prefixed tableau calculus. The tableau calculus proposed by Bolander and Blackburn \cite{bolander2007} is a complete and terminating system for the basic hybrid logic based on $\mathbf{K}$. Their later research \cite{bolander2009} extended these results, proposing complete and terminating tableau calculi for a wider range of hybrid logics. Using this approach, we can now construct complete and terminating tableau calculi for the hybridized versions of the 15 logics belonging to the so-called modal cube \cite[\S 8]{SEPmodallogic}.

However, there are tableau calculi for other logics that have not yet been studied. In particular, the difficulty of $\mathbf{I4D}$ is believed to stem from its lack of the finite frame property. In fact, Bolander and Blackburn \cite{bolander2009} describe this challenge as follows.
\begin{quotation}
    But then we are faced with the task of combining such conditions as (irr), (sym), (asym), (antisym), (intrans), (uniq) and (tree) with (trans), and here matters are likely to be much trickier. Certainly loop-checks will usually be required, but it is unclear to us at present what kinds of general results we can hope for here, or what languages we can prove them for. To give an idea of the difficulties involved, note that even such a simple looking combination as (trans) $+$ (irr) does not have the finite frame property (consider the formula $F \top \land GF \top$, for example).
\end{quotation}

Takagi and the author \cite{nishimura2025} constructed a tableau calculus for hybrid logic corresponding to undirected graphs. To prove completeness, it was necessary to use not only traditional loop-checking but also model surgery. The methods used in that work are the basis for the proofs in this paper.

\subsection{Organization of the Paper}
The rest of this paper is organized as follows. Section \ref{secpreliminaries} reviews the semantics of hybrid logic and basic tableau calculus proposed in \cite{bolander2007}. Section \ref{secSPO} introduces a tableau calculus $\tableauIF$ for strict partially ordered models. Sections \ref{secUSPO} and \ref{secPO} give proofs of completeness and termination for tableau calculi $\tableauIFD$ and $\tableauPO$ corresponding to unbounded strict partially ordered models and partially ordered models, respectively. Section \ref{sec:TO} shows the completeness and termination for a tableau calculus $\tableauTO$ with respect to the totally ordered models and indicates the proof of that of $\tableauIFT$ corresponding to strictly totally ordered models. Section \ref{secfuture} outlines some future prospects of this research.

\section{Preliminaries}
\label{secpreliminaries}

\subsection{Kripke Semantics for Hybrid Logic}
\label{secsemantics}

Here, we review a hybrid logic with the \emph{satisfaction operators} $@_i$. See \cite{brauner2011} for more details on the semantics of hybrid logic.

\begin{definition}
  We have a countably infinite set $\Prop$ of \emph{propositional variables} and another countably infinite set $\Nom$ of \emph{nominals}, which is disjoint from $\Prop$. The \emph{formulas} $\varphi$ of hybrid logic are defined inductively as follows:
  \begin{align*}
    \varphi \Coloneqq p \mid i \mid \neg \varphi \mid \varphi \land \varphi \mid \varphi \lor \varphi \mid \Dia \varphi \mid \Box \varphi \mid @_i \varphi,
  \end{align*}
  where $p \in \Prop$ and $i \in \Nom$.

  We write $\varphi \rightarrow \psi$ to mean $\neg \varphi \lor \psi$.
\end{definition}

\begin{definition}
  A \emph{Kripke model} $\Kmodel = (W, R, V)$, or merely \emph{model}, is defined as follows:
  \begin{itemize}
    \item $W$ is a non-empty set.
    \item $R$ is a binary relation on $W$.
    \item $V$ is a function $V:\Prop \ \cup \ \Nom \to \mathcal{P}(W)$ such that $V(i) = \{ w \} \ \text{for some} \ w \in W$ for each $i \in \Nom$, where $\mathcal{P}(W)$ denotes the powerset of $W$.
  \end{itemize}

  Furthermore, we call a tuple $\Kframe = (W, R)$ that satisfies the first two conditions above a \emph{Kripke frame} (or, shortly, \emph{frame}).
\end{definition}

This definition reflects the key property of nominals: each nominal is true in only one world. This paper uses $w R v$ to mean $(w, v) \in R$, and $i^V$ to mean the world $w \in W$ such that $V(i) = \{ w \}$.

\begin{definition}
  Given a model $\Kmodel$, a possible world $w$ in $\Kmodel$, and a formula $\varphi$, the \emph{satisfaction relation} $\Kmodel, w \models \varphi$ is defined inductively as follows:
  \begin{align*}
    \Kmodel, w \models p &\iff w \in V(p), \text{where} \ p \in \Prop \\
    \Kmodel, w \models i &\iff w = i^V, \text{where} \ i \in \Nom \\
    \Kmodel, w \models \neg \varphi &\iff \text{not} \ \Kmodel, w \models \varphi \ (\Kmodel, w \not \models \varphi) \\
    \Kmodel, w \models \varphi \land \psi &\iff \Kmodel, w \models \varphi \ \text{and} \ \Kmodel, w \models \psi \\
    \Kmodel, w \models \varphi \lor \psi &\iff \Kmodel, w \models \varphi \ \text{or} \ \Kmodel, w \models \psi \\
    \Kmodel, w \models \Dia \varphi &\iff \text{there exists} \ v \ \text{such that} \ w R v \ \text{and} \ \Kmodel, v \models \varphi \\
    \Kmodel, w \models \Box \varphi &\iff \text{for all} \ v, \ \text{if} \ w R v, \ \text{then} \ \Kmodel, v \models \varphi \\
    \Kmodel, w \models @_i \varphi &\iff \Kmodel, i^V \models \varphi.
  \end{align*}
  \label{defsatis}
\end{definition}

\begin{definition}
    A formula $\varphi$ is said to be \emph{valid} (denoted by $\models \varphi$) if $\Kmodel, w \models \varphi$ holds for all models $\Kmodel$ and all of its worlds $w$.
    \label{defvalid}
\end{definition}

For simplicity, the rest of the paper deals only with the negation normal form (NNF, in short) of formulas. For the satisfaction operators, a formula $\neg @_i \varphi$ is equivalent to $@_i \neg \varphi$. That is, for any model and its possible world $w$, a formula $\varphi$, and a nominal $i \in \Nom$, we have
\[
    \Kmodel, w \models \neg @_i \varphi \iff \Kmodel, w \models @_i \neg \varphi .
\]
Transformations to the NNF involving Boolean and modal operators can be done in the usual way. In the rest of this paper, we only deal with NNF formulas. If we write $\neg \varphi$, we assume it as the abbreviation of an NNF of it.

At last, we review some properties of a binary relation.
\begin{definition}
    Given a binary relation $R$, we define the following properties:
    \begin{align*}
        R \text{ is \emph{serial} } &\iff \forall x \exists y. x R y \\
        R \text{ is \emph{reflexive} } &\iff \forall x. x R x \\
        R \text{ is \emph{irreflexive} } &\iff \forall x. \neg (x R x) \\
        R \text{ is \emph{anti-symmetric} } &\iff \forall x y. (x R y \ \& \ y R x \implies x = y) \\
        R \text{ is \emph{transitive} } &\iff \forall x y z. (x R y \ \& \ y R z \implies x R z) \\
        R \text{ is \emph{trichotomous}} &\iff \forall x y. (x R y \text{ or } y R x \text{ or } x = y) \\
        R \text{ is \emph{total}} &\iff \forall x y. (x R y \text{ or } y R x). 
    \end{align*}
    Also, we call some types of binary relations by special names:
    \begin{itemize}
        \item $R$ is a \emph{strict partial order} if it is both irreflexive and transitive.
        \item $R$ is an \emph{unbounded strict partial order} if it is serial, irreflexive, and transitive.
        \item $R$ is a \emph{partial order} if it is reflexive, anti-symmetric, and transitive.
        \item $R$ is a \emph{strict total order} if it is irreflexive, transitive, and trichotomous.
        \item $R$ is a \emph{partial order} if it is reflexive, anti-symmetric, transitive, and total.
    \end{itemize}
\end{definition}

Given a model $\Kmodel = (W, R, V)$, we say that $\Kmodel$ is irreflexive if $R$ is irreflexive. We use the same terminology for other properties of a binary relation. Moreover, we define some classes of models:
\begin{itemize}
    \item $\Kmodel$ is a \emph{strict partially ordered model}, or an \emph{SPO model}, if $R$ is a strict partial order.
    \item $\Kmodel$ is an \emph{unbounded strict partially ordered model}, or a \emph{USPO model}, if $R$ is an unbounded strict partial order.
    \item $\Kmodel$ is a \emph{partially ordered model}, or a \emph{PO model}, if $R$ is a partial order.
    \item $\Kmodel$ is a \emph{strict totally ordered model}, or an \emph{STO model}, if $R$ is a strict total order.
    \item $\Kmodel$ is a \emph{totally ordered model}, or an \emph{TO model}, if $R$ is a total order.
\end{itemize}

\subsection{Basic Tableau Calculus}
\label{secsyntaxK}

$\hybridK$ is an axiomatization of hybrid logic with an operator $@$, based on the minimal normal modal logic $\mathbf{K}$ (see \cite{blackburn2006P} for more details on the axiomatization of hybrid logic). We write $\tableau$ to indicate the tableau calculus for hybrid logic $\hybridK$. Our presentation is based on \cite{bolander2007}.

\begin{definition}
  A \emph{tableau} is a well-founded tree constructed in the following way:
  \begin{itemize}
    \item Start with a formula of the form $@_i \varphi$ (called the \emph{root formula}), where $i \in \Nom$ does not occur in the formula $\varphi$.
    \item For each branch, extend it by applying rules (see Definition \ref{defrules}). However, we can no longer add any formula in a branch if at least one of the following conditions is satisfied:
    \begin{itemize}
        \item Every new formula generated by applying any rule already exists in the branch.
        \item The branch is closed (Definition \ref{defclose} defines what a closed branch is).
    \end{itemize}
  \end{itemize}
  If a formula $\varphi$ occurs in a branch $\Theta$, we write $\varphi \in \Theta$.
  \label{deftableau}
\end{definition}

\begin{definition}
    A branch $\Theta$ of a tableau is \emph{closed} if there exists a formula $\varphi$ and a nominal $i$ such that $@_i \varphi, @_i \neg \varphi \in \Theta$. We say that $\Theta$ is \emph{open} if it is not closed. A tableau is called \emph{closed} if all branches in the tableau are closed.
    \label{defclose}
\end{definition}

\begin{definition}
  We provide the rules of $\tableau$ in Figure \ref{figrules}. In these rules, the formulas above the line show the formulas that have already occurred in the branch, and the formulas below the line show the formulas that will be added to the branch. The vertical line in the $[\lor]$ means that the branch splits to the left and right.
  \begin{figure}[tbp]
  \begin{gather*}
    \infer[{[\neg]}]
        {@_j j}
        {@_i \neg j}
    \qquad
    \infer[{[\land]}]
        {\deduce{@_i \psi} {@_i \varphi}}
        {@_i (\varphi \land \psi)}
    \qquad
    \infer[{[\lor]}]
        {@_i \varphi \mid @_i \psi}
        {@_i (\varphi \lor \psi)} \\
    \\
    \infer[{[\Dia]}^{*1, *2, *3}]
        {\deduce{@_j \varphi}{@_i \Dia j}}
        {@_i \Dia \varphi}
    \qquad
    \infer[{[\Box]}]
        {@_j \varphi}
        {\deduce{@_i \Dia j}{@_i \Box \varphi}}
    \qquad
    \infer[{[@]}]
        {@_j \varphi}
        {@_i @_j \varphi}
    \qquad
    \infer[{[\Id]}^{*3}]
        {@_j \varphi}
        {\deduce{@_i j} {@_i \varphi}}
  \end{gather*}
  
  *1: $j \in \Nom$ does not occur in the branch above.

  *2: This rule can be applied only once per formula.

  *3: The formula above the line is not an accessibility formula. Here, an \emph{accessibility formula} is the formula of the form $@_i \Dia j$ generated by [$\Dia$], where $j$ is a new nominal.
  \caption{The rules of $\tableau$}
  \label{figrules}
  \end{figure}
  \label{defrules}
\end{definition}

\begin{definition}
    Given a formula $\varphi$, we say that $\varphi$ is \emph{provable} in $\tableau$ if there is a closed tableau whose root formula is $@_i \neg \varphi$, where $i \in \Nom$ does not occur in $\varphi$.
\end{definition}

\begin{figure}[t]
  \begin{align*}
    &1. \ @_i (\Dia j \land @_j p \land \Box \neg p) & \\
    &2. \ @_i \Dia j &[\land] \\
    &3. \ @_i (@_j p \land \Box \neg p) &[\land] \\
    &4. \ @_i @_j p &[\land] \\
    &5. \ @_i \Box \neg p &[\land] \\
    &6. \ {@_i \Dia k}^* &[\Dia] \\
    &7. \ @_k j &[\Dia] \\
    &8. \ @_j p &[@] \\
    &9. \ @_k \neg p &[\Box] \\
    &10. \ @_j \neg p &[\Id] \\ 
    &\qquad \text{\ding{55}}
  \end{align*}
  \centering
  \caption{A closed tableau of $\tableau$. Formulas with ${}^*$ are accessibility formulas.}
  \label{figtab}
\end{figure}

Figure \ref{figtab} is an example of a tableau proving the hybrid formula $(\Dia j \land @_j p) \rightarrow \Dia p$. Here is an intuitive explanation of this formula: if there is a reachable world named $j$ and a formula $p$ holds in the world $j$, then there is a reachable world in which $p$ holds. It starts from $@_i (\Dia j \land @_j p \land \Box \neg p)$, whose subformula $\Dia j \land @_j p \land \Box \neg p$ is an NNF of the formula we want to prove. There are ten formulas in this branch; however, we can shorten this proof tree by applying $[\Box]$ directly to the second and fifth formulas and deriving $@_j \neg p$.

Bolander and Blackburn \cite{bolander2007} showed two significant properties of $\tableau$. One is the termination property, and the other is completeness.

\begin{theorem}
  The tableau calculus $\tableau$ has the \emph{termination property}. That is, for every formula $\varphi$, the tableau in $\tableau$ for $\varphi$ is finite.
  \label{thmterm}
\end{theorem}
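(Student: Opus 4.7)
The plan is to invoke K\"onig's Lemma: only $[\lor]$ splits a branch, and always binarily, so the tableau tree is finitely branching. Hence it suffices to show that every branch of $\tableau$ is finite. This reduces to bounding two quantities: (a) the number of distinct nominals occurring in a branch, and (b) for each nominal $i$, the number of distinct formulas of the form $@_i \varphi$ in that branch.

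For (b), I would first establish a \emph{subformula property} by induction on rule applications: every non-accessibility formula in a branch has the form $@_i \psi$ where $\psi$ is a subformula of the root formula $@_i \varphi_0$. Each rule inspects one or two premises and emits a conclusion whose propositional/modal content is already contained in them, so this invariant is preserved (the only exception being accessibility formulas $@_i \Dia j$ produced by $[\Dia]$, which are handled separately). Since the set of subformulas of $\varphi_0$ is finite, only finitely many non-accessibility formulas can be attached to any given nominal, and accessibility formulas at $i$ are bounded in number by the number of nominals in the branch.

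For (a), the strategy is to bound a ``generation depth'' of nominals. New nominals arise only via $[\Dia]$, and restriction *2 ensures each formula $@_i \Dia \varphi$ triggers $[\Dia]$ at most once. Assign to each nominal $i$ a rank bounded by the maximal modal depth of a subformula $\psi$ with $@_i \psi$ in the branch. The root nominal has rank at most the modal depth of $\varphi_0$. When $[\Dia]$ introduces a fresh $j$ from $@_i \Dia \varphi$, the formula $@_j \varphi$ initially deposited at $j$ has modal depth strictly less than that of $\Dia \varphi$; any further $@_j \chi$ produced by $[\Box]$ from $@_i \Box \chi$ and an accessibility formula $@_i \Dia j$ also has strictly smaller modal depth. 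Hence the chain of newly introduced nominals has length at most the modal depth of $\varphi_0$, and at each level the branching is bounded by the number of $\Dia$-subformulas of $\varphi_0$, which yields a finite bound on the total number of nominals.

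The main obstacle is accounting for the $[\Id]$ and $[@]$ rules, which copy a formula $@_i \varphi$ to $@_j \varphi$ as soon as $@_i j$ (respectively $@_i @_j \varphi$) is derived. A priori this could import formulas across nominals and disturb the modal-depth bookkeeping. Restriction *3 is exactly what prevents accessibility formulas from being propagated by $[\Id]$, so no indirect chain of $[\Dia]$-applications can be spawned via identification. A careful case analysis then confirms that once $[\Id]$- and $[@]$-propagations stabilize, every nominal still carries only finitely many distinct non-accessibility formulas drawn from the subformulas of $\varphi_0$, and the nominal-generation tree remains finite. Combining (a) and (b) bounds the length of every branch, and K\"onig's Lemma delivers the finiteness of the tableau.
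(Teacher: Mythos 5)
The paper does not actually prove this theorem itself; it is quoted from Bolander and Blackburn \cite{bolander2007}, and your sketch follows essentially the strategy of that cited proof (and of the paper's own termination arguments in Section 3.2): reduce finiteness of the tableau to finiteness of its branches via K\"onig's lemma, bound the number of formulas per nominal by a subformula property, and bound the number of nominals by a rank that strictly decreases along the generation relation $\prec_\Theta$. Both the decomposition and the decreasing-rank idea are the right ones for $\tableau$; no loop-check is needed here, unlike for $\tableauIF$.

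The one substantive gap is that you defer the crux to ``a careful case analysis'' of $[\Id]$ and $[@]$ without carrying it out, and that analysis is exactly where the decreasing-rank argument could break: if $[\Id]$ could deposit a formula of large modal depth at a nominal generated deep in a $\prec_\Theta$-chain, the rank of that nominal would jump back up and the chain-length bound would fail. Restriction *3 alone does not rule this out --- it only blocks the copying of accessibility formulas, not of arbitrary high-depth formulas. The observation that closes the gap is that for a \emph{generated} nominal $j$, i.e.\ one not occurring in the root formula, no formula of the form $@_k j$ or $@_k @_j \chi$ can ever appear on the branch: by your subformula property such a formula would have to be a prefixed subformula of the root formula, which is impossible since $j$ does not occur there, and the only other potential source of $@_j j$ is $[\neg]$ applied to some $@_k \neg j$, which is excluded for the same reason. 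Hence $[\Id]$ and $[@]$ only ever copy formulas onto the finitely many nominals of the root formula, whose formula sets are bounded a priori, while every formula at a generated nominal arrives from its unique $\prec_\Theta$-parent via $[\Dia]$ or $[\Box]$, or by decomposition in place --- which is precisely what makes the rank strictly decrease along $\prec_\Theta$. State and prove that auxiliary lemma and your argument is complete.
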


\begin{theorem}
  The tableau calculus $\tableau$ is \emph{complete} for the class of all frames.
  \label{thmcomplete}
\end{theorem}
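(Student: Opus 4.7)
The plan is to argue by contraposition: I will show that if $\varphi$ is not provable in $\tableau$, then $\varphi$ is not valid. By Theorem~\ref{thmterm}, every tableau with root $@_i \varphi'$ (where $i$ does not occur in $\varphi$ and $\varphi'$ is the NNF of $\neg\varphi$) is finite, so if none closes, the completed tree must contain at least one \emph{saturated} open branch $\Theta$---one on which every rule that could fire has already fired. From $\Theta$ I will extract a countermodel witnessing that $\varphi$ is falsifiable.

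The yielded model $\Kmodel = (W, R, V)$ is constructed as follows. Let $\sim$ be the smallest equivalence relation on the nominals occurring in $\Theta$ containing all pairs $(i, j)$ with $@_i j \in \Theta$, and write $[i]$ for the class of $i$. Set $W = \{[i] : i \text{ occurs in } \Theta\}$, $[i] R [j]$ iff $@_i \Dia j \in \Theta$, $V(p) = \{[i] : @_i p \in \Theta\}$, and $V(i) = \{[i]\}$. The $[\Id]$ rule is what makes $R$ and $V$ well-defined on equivalence classes: if $@_i \psi \in \Theta$ and $i \sim i'$, then repeated applications of $[\Id]$ on the saturated branch force $@_{i'} \psi \in \Theta$.

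The heart of the proof is a \emph{truth lemma}: for every formula $\psi$ and every nominal $i$ with $@_i \psi \in \Theta$, we have $\Kmodel, [i] \models \psi$. This is proved by induction on $\psi$. Atomic and nominal base cases follow from the definitions of $V$ and $\sim$; the negated atom and negated nominal cases use openness of $\Theta$, together with the $[\neg]$ rule for the latter. The $[\land]$, $[\lor]$, and $[@]$ cases invoke the corresponding rules. For $\psi = \Dia \chi$, saturation of $[\Dia]$ produces a witness nominal $k$ with $@_i \Dia k, @_k \chi \in \Theta$, yielding $[i] R [k]$ directly and $\Kmodel, [k] \models \chi$ by induction. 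For $\psi = \Box \chi$, whenever $[i] R [j]$ so that $@_i \Dia j \in \Theta$, the $[\Box]$ rule produces $@_j \chi \in \Theta$, and induction finishes. Applying the lemma to the root $@_i \varphi'$ gives $\Kmodel, [i] \not\models \varphi$, as required.

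The main obstacle I anticipate is the side condition $*3$, which blocks $[\Id]$ and $[\Dia]$ from firing on accessibility formulas $@_i \Dia k$ in which $k$ is a witness nominal. This matters when verifying well-definedness of $R$: I cannot directly use $[\Id]$ to transport such formulas along nominal equalities, so instead I must exploit the fact that accessibility formulas are introduced only by fresh-nominal applications of $[\Dia]$ and argue that the resulting structure of $R$ is already compatible with $\sim$ for purely combinatorial reasons. A related subtlety is the inductive case $\psi = \Dia k$ where $k$ is a witness nominal, which must be dispatched directly from the definition of $R$ rather than by invoking $[\Dia]$, since that rule is blocked on such formulas.
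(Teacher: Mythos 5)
Your overall skeleton (contraposition, termination, extraction of an open saturated branch, model construction, truth lemma) is the right one and matches the approach of \cite{bolander2007} on which the paper relies. The gap lies in how you handle nominal identities. You quotient by the \emph{symmetric} closure $\sim$ of $\{(i,j) : @_i j\in\Theta\}$ and assert that ``repeated applications of $[\Id]$ force $@_{i'}\psi\in\Theta$'' whenever $i\sim i'$ and $@_i\psi\in\Theta$. This is false for $\tableau$: the rule $[\Id]$ only moves formulas from the subject $i$ of an identity statement $@_i j$ to its object $j$, and $\tableau$ has no $[\reflexivity]$ rule, so from $@_i j\in\Theta$ one cannot in general obtain $@_j i\in\Theta$ (that would require $@_i i\in\Theta$ as the second premise of $[\Id]$). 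Hence the formula sets along a $\sim$-class satisfy only \emph{directed} inclusions, not equalities. The paper itself signals this point: it replaces $[\neg]$ by $[\reflexivity]$ in $\tableauIF$ precisely because freely available $@_i i$ symmetrizes the identity information and ``makes it easier to prove some lemmas.''

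This is not cosmetic; it breaks your truth lemma in the $\Box$ case. Consider the root formula $@_a\bigl(@_{i'} i\land @_i\Box\neg p\land @_{i'}\Dia(p\lor\neg p)\bigr)$. On a saturated open branch, $[\Dia]$ witnesses the diamond at $i'$ by a fresh $j'$, and the disjunction at $j'$ may be resolved to $@_{j'}p$; $[\Id]$ copies $\Dia(p\lor\neg p)$ from $i'$ to $i$, where it is re-witnessed by a fresh $k$ that $[\Box]$ correctly forces to satisfy $\neg p$; but $@_i\Box\neg p$ never travels back to $i'$, so $j'$ is never constrained and the branch stays open. In your model $[i]=[i']$, the edge $[i]\mathrel{R}[j']$ is present (via the accessibility formula $@_{i'}\Dia j'$) and $[j']\in V(p)$, so $\Kmodel,[i]\not\models\Box\neg p$ even though $@_i\Box\neg p\in\Theta$. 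The repair is exactly the urfather machinery of \cite{bolander2007} (adapted in Definitions \ref{defiur} and \ref{defI4tableaumodel} of this paper): select for each nominal a designated, maximally informed representative via the \emph{directed} inclusion of formula sets, build the model on representatives only, and keep only the accessibility edges emanating from representatives, so that every surviving edge is governed by all the $\Box$-formulas of its source. The ``purely combinatorial'' compatibility of $R$ with $\sim$ that you invoke does not hold without such a selection.
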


\section{Tableau Calculus for Strict Partial Order}
\label{secSPO}

In this section, we define the tableau calculus $\tableauIF$ corresponding to the SPO models.

\subsection{Definition}

First of all, we define the tableau calculus $\tableauIF$. We then give the details of the definition.

\begin{definition}
    The tableau calculus $\tableauIF$ is constructed by replacing $[\neg]$ with $[\reflexivity]$ and adding the rules $[\irr]$ and $[\trans]$, and a restriction $\Drest$ to $\tableau$.
\end{definition}

The rules $[\reflexivity]$, $[\irr]$, and $[\trans]$ are shown in Figure \ref{figrulesadd}. The restriction $\Drest$ is as follows.

\begin{description}
  \item[$\Drest$] The rule $[\Dia]$ can only be applied to a formula $@_i \Dia \varphi$ on a branch $\Theta$ if $i$ is a quasi-urfather (the \emph{quasi-urfather} is defined later in Definition \ref{defqur}).
\end{description}

We introduce the two rules $[\irr]$ and $[\trans]$ in order to express the irreflexivity and transitivity of models in the tableau calculus. By the rule $[\irr]$, we add a formula $@_i \Box \neg i$, which means ``we cannot reach the world named $i$ from the world $i$ itself.'' The rule $[\trans]$ corresponds to the fact that if a formula $\Box \varphi$ holds in a world $w$, then $\Box \varphi$ also holds in all worlds reachable from $w$.

In addition, we replace $[\neg]$ with a new rule $[\reflexivity]$. This rule allows a formula of the form $@_i i$ to be added at any time for a nominal $i$ that already appears on the branch. The rule is named after the initial letters of ``equality'' because it can be used freely, much like introducing the term $x = x$ in first-order logic. This change makes it easier to prove some lemmas.

\begin{figure}[tbp]
  \begin{gather*}
    \infer[{[\reflexivity]}^{*1}]
        {@_i i}
        { }
    \quad
    \infer[{[\ser]}^{*2, *3}]
        {@_i \Dia j}
        { }
    \quad
    \infer[{[\reflex]}^{*1, *4}]
        {@_i \Dia i}
        { }
    \infer[{[\irr]}^{*1}]
        {@_i \Box \neg i}
        { }
    \quad    
    \infer[{[\trans]}]
        {@_j \Box \varphi}
        {\deduce{@_i \Dia j}{@_i \Box \varphi}} \\
    \\
    \infer[{[\antisym]}^{*1}]
        {@_i \Box (i \lor \Box \neg i)}
        { }
    \quad
    \infer[{[\total]}^{*1, *4}]
        {@_i \Dia j \mid @_j \Dia i}
        { }
    \quad
    \infer[{[\trich]}^{*1, *4}]
        {@_i \Dia j \mid @_i j \mid @_j \Dia i}
        { }
  \end{gather*}

  *1: $i, j$ has already occurred in the branch.

  *2: $i$ is already present in the branch, while $j$ is not.

  *3: This rule can be applied only once per nominal $i$.

  *4: The formula(s) of the form $@_i \Dia j$ below the line is (are) accessibility formula(s).
  \caption{Additional rules}
  \label{figrulesadd}
\end{figure}

Then, what is the role of $\Drest$? As pointed out by \cite{bolander2009}, the addition of $[\trans]$ enables us to make an infinite branch, as shown in Figure \ref{figtabtrans}. To address the problem, we introduce a loop-checking method to add a restriction $\Drest$ to the tableau calculus. This was originally invented to deal with labeled modal tableau with inverse modalities in \cite[Section 5.2]{bolander2007}. Bolander and Blackburn also showed that this restriction works well with $\tableau + [\trans]$ in \cite[Section 7]{bolander2009}.

\begin{figure}[t]
  \begin{align*}
    &1. \ @_i (\Dia p \land \Box \Dia p) & \\
    &2. \ @_i \Dia p &[\land] \\
    &3. \ @_i \Box \Dia p &[\land] \\
    &4. \ {@_i \Dia j}^* &[\Dia] \\
    &5. \ @_j p &[\Dia] \\
    &6. \ @_j \Dia p &[\Box] \\
    &7. \ @_j \Box \Dia p &[\trans] \\
    &8. \ {@_j \Dia k}^* &[\Dia] \\
    &9. \ @_k p &[\Dia] \\ 
    &10. \ @_k \Dia p &[\Box] \\
    &11. \ @_k \Box \Dia p &[\trans] \\
    &12. \ {@_k \Dia l}^* &[\Dia] \\
    &13. \ @_l p &[\Dia] \\ 
    &14. \ @_l \Dia p &[\Box] \\
    &15. \ @_l \Box \Dia p &[\trans] \\
    &\qquad \vdots
  \end{align*}
  \caption{Non-terminating tableau of $\tableau$ with $[\trans]$. Formulas with ${}^*$ are accessibility formulas.}
  \label{figtabtrans}
\end{figure}

\vspace{\baselineskip}
To understand $\Drest$, it is necessary to prepare several concepts. Our goal is to define the concept of \emph{quasi-urfather}, introduced in \cite[\S 5.2]{bolander2007}. Intuitively, the quasi-urfather is a representative of the redundant nominals. For example, in Figure \ref{figtabtrans}, the nominals $j, k, l, \ldots$ play the same role (the worlds they point to verify the same propositions, $p$, $\Dia p$, and $\Box \Dia p$). The quasi-urfather is a representative of these redundant nominals.

\begin{definition}
  Given two formulas of the form $@_i \varphi$ and $@_j \psi$, $@_i \varphi$ is a \emph{prefixed subformula} of $@_j \psi$ if $\varphi$ is a subformula of $\psi$.
\end{definition}

\newpage

\begin{lemma}
  Let $\Theta$ be a branch of a tableau. For every formula of the form $@_i \varphi$ in $\Theta$, at least one of the following conditions is satisfied:
  \begin{itemize}
      \item It is a prefixed subformula of the root formula of $\Theta$.
      \item It is an accessibility formula.
      \item It is a prefixed subformula of $@_j \Box \neg j$ for some $j$.
  \end{itemize}
  \label{lemquasi}
\end{lemma}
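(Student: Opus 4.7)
The plan is to prove the lemma by induction on the construction of the branch $\Theta$. For the base case, the root formula $@_i \varphi$ is vacuously a prefixed subformula of itself, so condition 1 is satisfied. For the inductive step, I assume that every formula currently in $\Theta$ satisfies one of the three conditions, and I show that whichever rule of $\tableauIF$ I apply, each newly added formula still does.

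The rules split into three natural groups. The first group, consisting of $[\land], [\lor], [@], [\Id], [\Box]$, and $[\trans]$, is \emph{propagating}: in each case the conclusion's body is a subformula of the body of a premise of the form $@_k \chi$ satisfying the inductive hypothesis. So if that premise fits condition 1 (respectively condition 3), so does the conclusion. I should note that condition 2 can never be the witness for the premise here, because accessibility formulas have the shape $@_i \Dia j$, which does not match the premises $@_i (\varphi \land \psi), @_i (\varphi \lor \psi), @_i @_j \varphi, @_i \varphi \text{ (with } @_i j\text{)}, @_i \Box \varphi$; the restriction $*3$ on $[\Id]$ is what rules this out explicitly for that rule, and the shape rules it out for the others. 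The second group is $[\Dia]$, which produces two formulas: the accessibility formula $@_i \Dia j$ falls under condition 2 by definition, while $@_j \varphi$ propagates the condition of $@_i \Dia \varphi$ (again only conditions 1 or 3, since the restriction $*3$ forbids the premise from being an accessibility formula).

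The third group consists of the two zero-premise rules $[\reflexivity]$ and $[\irr]$. Rule $[\irr]$ introduces $@_i \Box \neg i$, which is a prefixed subformula of itself via condition 3 (taking $j = i$). Rule $[\reflexivity]$ introduces $@_i i$; this is the delicate case because the new formula appears without a structural parent. The key observation is that condition 3 only requires $@_i \varphi$ to fit the \emph{syntactic} shape of a prefixed subformula of $@_j \Box \neg j$ for some $j$; it does not demand that $@_j \Box \neg j$ itself occur in $\Theta$. Taking $j = i$, we see $i$ is a subformula of $\Box \neg i$, so $@_i i$ is a prefixed subformula of $@_i \Box \neg i$, fulfilling condition 3.

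The main obstacle is precisely the $[\reflexivity]$ case, since it is the only rule whose output is not obviously connected to its inputs; once the purely syntactic reading of condition 3 is fixed upon, the case collapses. A secondary point of care is the $[\trans]$ case when the inductive hypothesis places $@_i \Box \varphi$ under condition 3, say as a prefixed subformula of $@_k \Box \neg k$: there $\Box \varphi$ must literally equal $\Box \neg k$, forcing $@_j \Box \varphi = @_j \Box \neg k$, which is again a prefixed subformula of $@_k \Box \neg k$. Everything else is bookkeeping on the subformula relation.
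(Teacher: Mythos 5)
Your proof is correct and takes essentially the same route as the paper's: induction on the number of rule applications with a case-by-case check of each rule, where $[\irr]$ directly yields a formula of the third kind and $[\trans]$ propagates the disjunction of the first and third conditions. The paper writes out only the $[\irr]$ and $[\trans]$ cases and leaves the rest to the reader; your explicit handling of the remaining rules, including the purely syntactic reading of the third condition that settles the $[\reflexivity]$ case, fills in exactly what the paper omits.
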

\begin{proof}
    By induction on the number of applied rules. We provide the proof only for the cases $[\irr]$ and $[\trans]$. The other cases are left to the reader.
    \begin{description}
        \item[{$[\irr]$}] The formula we add by $[\irr]$ is the very formula of the form $@_j \Box \neg j$ for some $j$ that has already occurred in $\Theta$.
        \item[{$[\trans]$}] $@_j \Box \varphi$ is a prefixed subformula of $@_i \Box \varphi$. By the induction hypothesis, $@_i \Box \varphi$ is either a prefixed subformula of the root formula or the formula of the form $@_i \Box \neg k$. Therefore, $@_j \Box \varphi$ is a prefixed subformula of either the root formula or a formula $@_k \Box \neg k$. \qedhere
    \end{description}
\end{proof}

Next, we define the set $T^\Theta(i)$ for each nominal $i$ occurring in a branch $\Theta$. This set consists of the formulas in $\Theta$ that have the prefix $@_i$ and contain information related to the root formula of $\Theta$. This definition differs from the one in the proof in \cite{bolander2007}, which defines $T^\Theta(i)$ as the set $T_1^\Theta(i)$ below. This small change has a big effect on coping with both the loop-checking method and irreflexivity.

\begin{definition}
  Let $\Theta$ be a branch of a tableau. For every nominal $i$ occurring in $\Theta$, the set $T^\Theta(i)$ is a union of $T_1^\Theta(i)$ and $T_2^\Theta(i)$, where:
  \begin{itemize}
      \item $T_1^\Theta(i)$ is the set of formulas $\varphi$ such that $@_i \varphi \in \Theta$ is a prefixed subformula of the root formula of $\Theta$.
      \item $T_2^\Theta(i)$ is the set of formulas $\psi$ such that $@_i \psi \in \Theta$ is a prefixed subformula of $@_j \Box \neg j$ for some $j$ occurring in the root formula of $\Theta$.
  \end{itemize}
  \label{defT}
\end{definition}

\begin{lemma}
    For all $i$ occurring in $\Theta$, $T^\Theta(i)$ is finite.
\end{lemma}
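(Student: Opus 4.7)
The plan is to bound $T_1^\Theta(i)$ and $T_2^\Theta(i)$ separately by finite sets that depend only on the root formula, and then conclude that their union is finite.

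First I would handle $T_1^\Theta(i)$. By definition, every $\varphi \in T_1^\Theta(i)$ is a subformula of the root formula of $\Theta$ (since $@_i\varphi$ is a prefixed subformula of the root formula, and the root formula is a concrete fixed formula). Since any formula has only finitely many subformulas, $T_1^\Theta(i)$ is finite, uniformly bounded by the number of subformulas of the root.

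Next I would handle $T_2^\Theta(i)$. Here $\psi \in T_2^\Theta(i)$ means $@_i \psi$ is a prefixed subformula of $@_j \Box \neg j$ for some nominal $j$ occurring in the root formula. The root formula, being a single fixed formula, contains only finitely many nominals, so there are only finitely many candidate $j$'s. For each such $j$, the subformulas of $\Box \neg j$ are just $\Box \neg j$, $\neg j$, and $j$, giving at most three possible values of $\psi$. Thus $T_2^\Theta(i)$ is bounded by $3$ times the number of nominals in the root formula, hence finite.

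Combining, $T^\Theta(i) = T_1^\Theta(i) \cup T_2^\Theta(i)$ is a union of two finite sets and therefore finite. The only subtle point — and what would be the main thing to double-check — is the reading of Definition \ref{defT}: in the $T_2$ clause, the $j$ ranges over nominals \emph{occurring in the root formula}, not over all nominals introduced in $\Theta$ (which could of course be unboundedly many through applications of $[\Dia]$). With the definition as stated, though, the bound is immediate and independent of $i$ and of the length of $\Theta$, so no induction on the rule applications is needed.
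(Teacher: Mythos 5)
Your proof is correct and follows essentially the same route as the paper's: bound $T_1^\Theta(i)$ by the finitely many subformulas of the root formula and $T_2^\Theta(i)$ by the finitely many nominals occurring in the root formula (times the handful of subformulas of $\Box\neg j$), then take the union. Your remark that the $j$ in the $T_2$ clause ranges only over nominals of the root formula is exactly the point the paper's definition is designed to secure, so no further work is needed.
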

\begin{proof}
    $T_1^\Theta(i)$ is finite because for each $i$, the number of prefixed subformulas of the root formula of $\Theta$ is finite. In turn, $T_2^\Theta(i)$ is finite because the number of nominals occurring in the root formula of $\Theta$ is finite.
\end{proof}

Using this definition, we define the concept of \emph{twins}, a tool for identifying nominals that perform the same role.

\begin{definition}
    Let $\Theta$ be a branch of a tableau. The nominals $i, j$ are \emph{twins} in $\Theta$ if $T^\Theta(i) = T^\Theta(j)$.
\end{definition}

Next, we define a generating relation between nominals. If $j$ is introduced by applying $[\Dia]$ to $@_i \Dia \varphi$, we say that $j$ is \emph{generated} by $i$. Since a new nominal is generated if and only if an accessibility formula is added, we can use the following definition.

\begin{definition}
    Let $\Theta$ be a tableau branch, and let $i$ and $j$ be nominals occurring in $\Theta$. We say that $j$ is \emph{generated} from $i$ (denoted by $i \prec_\Theta j$) if $j$ is introduced by applying $[\Dia]$ to $@_i \Dia j$.
    \label{defgenerate}
\end{definition}

In the case of $\tableau$, this definition is equivalent to the existence of an accessibility formula $@_i \Dia j$ in $\Theta$. However, later in this paper, we will deal with rules that add accessibility formulas directly without using $[\Dia]$---for example, $[\reflex]$ and $[\total]$. In such cases, this equivalence no longer holds.

Based on these preparations, we can give a precise definition of the quasi-urfather.

\begin{definition}
    We call a nominal $i$ \emph{quasi-urfather} on $\Theta$ if there are no twins $j, k$ such that $j \neq k$ and $j, k \prec_\Theta^* i$, where $\prec_\Theta^*$ denotes a reflexive and transitive closure of $\prec_\Theta$.
    \label{defqur}
\end{definition}

Now, we can understand the statement of $\Drest$. Before we show the termination property, let us show an example of a tableau whose generation terminates owing to $\Drest$.

\begin{example}
    With $\Drest$, the tableau generation in Figure \ref{figtabtrans} stops before adding the twelfth formula $@_k \Dia l$. In this branch, up to the eleventh formula $@_k \Box \Dia k$, nominals $j$ and $k$ are twins, since $T^\Theta(j) = T^\Theta(k) = \{ p, \Dia p, \Box \Dia p \}$. Then $k$ is not a quasi-urfather, so we can no longer apply $[\Dia]$ to the tenth formula $@_k \Dia p$.
\end{example}

To prevent the tableau generation from ending due to $\Drest$, it is possible to intentionally adjust the application order of the rules to avoid making nominals twins. However, such attempts will eventually fail. For example, in Figure \ref{figtabtrans2}, the restriction of $\Drest$ is circumvented by applying $[\Dia]$ to the tenth formula $@_k \Dia p$, generating a new nominal $l$ (consider that $j$ and $k$ are not twins in the branch up to the tenth formula). However, to generate further nominals, the fourteenth formula $@_l \Dia p$ is necessary, at which point $l$ ceases to be a quasi-urfather. This is because $j$ and $k$ are now twins, and $j \prec_\Theta k \prec_\Theta l$ holds.

\begin{figure}[tbp]
  \begin{align*}
    &1. \ @_i (\Dia p \land \Box \Dia p) & \\
    &2. \ @_i \Dia p &[\land] \\
    &3. \ @_i \Box \Dia p &[\land] \\
    &4. \ {@_i \Dia j}^* &[\Dia] \\
    &5. \ @_j p &[\Dia] \\
    &6. \ @_j \Dia p &[\Box] \\
    &7. \ @_j \Box \Dia p &[\trans] \\
    &8. \ {@_j \Dia k}^* &[\Dia] \\
    &9. \ @_k p &[\Dia] \\ 
    &10. \ @_k \Dia p &[\Box] \\
    &11. \ {@_k \Dia l}^* &[\Dia] \\
    &12. \ @_l p &[\Dia] \\
    &13. \ @_k \Box \Dia p &[\trans] \\ 
    &14. \ @_l \Dia p &[\Box] \\
  \end{align*}
  \caption{Another example of tableau terminated due to $\Drest$ (Formulas with ${}^*$ are accessibility formulas.)}
  \label{figtabtrans2}
\end{figure}

\subsection{Termination Property}
\label{subsecterm}

Here, we show the termination property for $\tableauIF$. The proof can be carried out based on that for the tableau calculus for $\mathbf{K4}$ in \cite{bolander2009}. One reason is that none of the added rules freely introduce formulas containing $\Dia$, and another is that the redefined $T^\Theta(i)$ remains finite.

\begin{lemma}
    Given a branch $\Theta$ of a tableau, we define a structure $G^\Theta = (N^\Theta, \prec_\Theta)$ where:
    \begin{itemize}
        \item $N^\Theta$ is the set of nominals occurring in $\Theta$.
        \item $i \prec_\Theta j$ if $j$ is generated by $i$ (see Definition \ref{defgenerate}).
    \end{itemize}
    Then, $G^\Theta$ is a finite disjoint union of well-founded and finitely branching trees.
    \label{lemG}
\end{lemma}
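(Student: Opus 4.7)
The plan is to establish three properties of $G^\Theta$: every non-root nominal has a unique parent under $\prec_\Theta$; the set of parentless nominals (roots) is finite; and every nominal has only finitely many children. Well-foundedness will follow from the order in which nominals are introduced into $\Theta$. For the first two points, I would classify how a nominal comes to appear in $\Theta$: either it already occurs in the root formula, or it is introduced by an application of $[\Dia]$. Side condition $*1$ on $[\Dia]$ forces the introduced nominal to be fresh, so that application adds a single accessibility formula $@_k \Dia i$, giving $i$ a unique parent $k$. Nominals occurring in the root formula are never generated this way, so they have no parent and form the roots of the trees; since the root formula is finite, there are only finitely many of them, which yields the finite disjoint union. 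Well-foundedness is then immediate: the parent of any non-root nominal was introduced into $\Theta$ strictly earlier than the nominal itself, so the ancestor chain of any nominal strictly decreases in introduction order and terminates at a root in finitely many steps.

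The crux is finite branching. Fix a nominal $i \in N^\Theta$. A child of $i$ arises precisely when $[\Dia]$ is applied to some formula $@_i \Dia \varphi$, producing an accessibility formula $@_i \Dia j$ with $j$ fresh. By side condition $*3$, $[\Dia]$ applies only to non-accessibility formulas, and by $*2$ each such formula is used at most once; hence the number of children of $i$ is bounded by the number of non-accessibility formulas of shape $@_i \Dia \varphi$ in $\Theta$. By Lemma \ref{lemquasi}, which is explicitly proved to cover the cases $[\irr]$ and $[\trans]$, every non-accessibility formula in $\Theta$ is a prefixed subformula either of the root formula or of some $@_j \Box \neg j$. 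The second possibility is ruled out when the top-level connective is $\Dia$, since the only subformulas of $\Box \neg j$ are $\Box \neg j$, $\neg j$, and $j$. So every relevant $@_i \Dia \varphi$ is a prefixed subformula of the root, of which there are only finitely many, and hence $i$ has only finitely many children.

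I expect the main obstacle to be exactly this appeal to Lemma \ref{lemquasi}: one must be sure that the three new rules $[\reflexivity]$, $[\irr]$, and $[\trans]$ of $\tableauIF$ indeed preserve its three-way classification of formulas, so that no stray non-accessibility formula with outer $\Dia$ can appear at any nominal. Once one observes that $[\reflexivity]$ and $[\irr]$ only introduce the formulas $@_i i$ and $@_i \Box \neg i$ (neither with outer $\Dia$), and that $[\trans]$ only propagates $\Box$-formulas whose prefixed subformulas were already accounted for in the induction hypothesis of Lemma \ref{lemquasi}, this verification is routine but does need to be checked explicitly before the counting argument above can be invoked.
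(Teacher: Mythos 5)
Your proof is correct and is essentially the argument the paper relies on: the paper gives no proof of its own here but simply cites \cite[Lemma 6.4]{bolander2007}, whose reasoning (unique fresh parent via $[\Dia]$, finitely many roots from the root formula, well-foundedness by introduction order, and finite branching via the subformula-classification lemma) is exactly what you have written out. Your explicit check that the added rules $[\reflexivity]$, $[\irr]$, $[\trans]$ do not produce non-accessibility $\Dia$-formulas outside the scope of Lemma \ref{lemquasi} is precisely the point the paper handles only in prose (``none of the added rules freely introduce formulas containing $\Dia$''), so no gap remains.
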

\begin{proof}
    See the proof of \cite[Lemma 6.4]{bolander2007}.
\end{proof}

\begin{lemma}
    Let $\Theta$ be a branch of a tableau. Then $\Theta$ is infinite if and only if we have the following infinite sequence:
    \[
        i_0 \prec_\Theta i_1 \prec_\Theta \cdots .
    \]
    \label{leminf}
\end{lemma}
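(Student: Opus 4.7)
The plan is to prove the two directions separately. For the easier direction ($\Leftarrow$), given an infinite sequence $i_0 \prec_\Theta i_1 \prec_\Theta \cdots$, each step $i_k \prec_\Theta i_{k+1}$ is witnessed by an accessibility formula $@_{i_k} \Dia i_{k+1}$ in $\Theta$ by Definition~\ref{defgenerate}. Because $G^\Theta$ is a forest (Lemma~\ref{lemG}), each nominal has at most one parent, so the $i_k$'s are pairwise distinct; hence these accessibility formulas are pairwise distinct and $\Theta$ is infinite.

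For the harder direction ($\Rightarrow$), suppose $\Theta$ is infinite. The plan is first to show that $\Theta$ must contain infinitely many nominals, and then to apply K\"onig's lemma to the forest $G^\Theta$. For the first step, I would argue by contradiction: assume only finitely many nominals occur in $\Theta$. By Lemma~\ref{lemquasi}, every formula of $\Theta$ is either a prefixed subformula of the root, an accessibility formula, or a prefixed subformula of some $@_j \Box \neg j$. For a fixed nominal $i$, the formulas of the first kind with prefix $@_i$ are bounded by the (finite) number of subformulas of the root. The accessibility formulas with prefix $@_i$ are bounded by the number of $@_i \Dia \varphi$ in $\Theta$, since $[\Dia]$ is applied at most once per formula and the relevant $\varphi$ belong to the finite set $T^\Theta(i)$. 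Finally, each $j$ contributes at most three subformulas of $@_j \Box \neg j$ (namely $\Box \neg j$, $\neg j$, and $j$) carrying prefix $@_i$, and by the finiteness assumption there are only finitely many such $j$'s. Summing finitely many finite bounds over finitely many nominals yields a finite branch, contradicting the infiniteness of $\Theta$.

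Thus $\Theta$ has infinitely many nominals. By Lemma~\ref{lemG}, they are spread across finitely many finitely branching trees, so at least one tree must contain infinitely many nodes. K\"onig's lemma applied to this tree delivers an infinite path $i_0 \prec_\Theta i_1 \prec_\Theta \cdots$, which is the desired sequence.

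I expect the main obstacle to be the counting step in the contradiction argument, particularly the third clause concerning formulas introduced by $[\irr]$ and then propagated through $[\trans]$, $[\Box]$, and $[\Id]$. Since $@_j \Box \neg j$ is generated for every nominal $j$ already in the branch and such formulas propagate along $\prec_\Theta$, one has to be careful that fixing the nominal set really does cap the number of such formulas at a single prefix; the trichotomy of Lemma~\ref{lemquasi} is exactly what makes this work.
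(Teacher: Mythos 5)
Your proof is correct and follows the same route the paper takes: the right-to-left direction via distinctness of the generated nominals, and the left-to-right direction via Lemma~\ref{lemG} together with K\"onig's lemma. The paper simply defers the details (in particular the counting step showing that finitely many nominals force a finite branch, which is where Lemma~\ref{lemquasi} enters) to the proof of Lemma~6.5 in Bolander and Blackburn's paper, and your write-up supplies essentially that argument.
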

\vspace{-2\baselineskip}
\begin{proof}
    The right-to-left direction is straightforward. The other direction is proved from Lemma \ref{lemG} along with K\"{o}nig's lemma. For more detail, see the proof of \cite[Lemma 6.5]{bolander2007}.
\end{proof}

\begin{theorem}
    The tableau calculus $\tableauIF$ has the termination property.
    \label{thmtermI4}
\end{theorem}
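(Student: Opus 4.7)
The plan is to argue by contradiction, following the template of the $\mathbf{K4}$-termination argument in Bolander--Blackburn \cite{bolander2009}. Assume some branch $\Theta$ of a tableau in $\tableauIF$ is infinite.

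By Lemma \ref{leminf}, $\Theta$ will contain an infinite chain $i_0 \prec_\Theta i_1 \prec_\Theta i_2 \prec_\Theta \cdots$. For each $k$, the accessibility formula $@_{i_k} \Dia i_{k+1}$ is introduced at some stage $t_k$ by an application of $[\Dia]$ to a formula $@_{i_k} \Dia \varphi_k$. By the restriction $\Drest$, $i_k$ must be a quasi-urfather in $\Theta_{t_k}$. Since by Lemma \ref{lemG} the generating structure is a forest of trees, the $\prec_{\Theta_{t_k}}^*$-ancestors of $i_k$ are exactly $i_0, \dots, i_k$; hence no two of them are twins at stage $t_k$, so the sets $T^{\Theta_{t_k}}(i_0), \dots, T^{\Theta_{t_k}}(i_k)$ are pairwise distinct.

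I would then observe that every $T^{\Theta_{t_k}}(i_j)$ lies inside a fixed finite set $S$ determined entirely by the root formula of $\Theta$: the $T_1$-part is contained in the subformulas of the root, and the $T_2$-part is contained in the subformulas of $@_l \Box \neg l$ for the finitely many nominals $l$ occurring in the root. Both are finite, so $k + 1 \le 2^{|S|}$ for every $k$, contradicting the infinitude of the chain.

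The main obstacle is the bookkeeping to ensure this really goes through: one must check that the newly added rules $[\reflexivity]$, $[\irr]$ and $[\trans]$ neither introduce fresh nominals (so that Lemmas \ref{lemG} and \ref{leminf} apply verbatim) nor enlarge $T^\Theta(i)$ beyond $S$. The latter is precisely why $T_2^\Theta(i)$ in Definition \ref{defT} is restricted to nominals $j$ occurring in the root, since $[\irr]$ can otherwise introduce formulas $@_k \Box \neg k$ for arbitrarily many fresh $k$ and would blow up the relevant finite bound. Once these points are verified, the rest is a direct adaptation of the reasoning in \cite{bolander2007,bolander2009}.
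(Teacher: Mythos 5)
Your proposal is correct and follows essentially the same route as the paper's proof: an infinite branch yields an infinite $\prec_\Theta$-chain by Lemma \ref{leminf}, the restriction $\Drest$ forces each generator in the chain to be a quasi-urfather, and the finiteness of the ambient set bounding $T^\Theta(i)$ (Lemma \ref{lemquasi} and Definition \ref{defT}) gives a pigeonhole contradiction via twins among the chain's ancestors. The only cosmetic difference is that you bound the length of every initial segment of the chain uniformly, whereas the paper fixes the single index $2^n$ and exhibits the offending pair of twins there; the substance is identical.
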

\begin{proof}
  This proof is written based on that of \cite[Theorem 5.8]{bolander2007}. We show this theorem by \textit{reductio ad absurdum}.

  Suppose there is an infinite branch $\Theta$. Then by Lemma \ref{leminf}, there is an infinite sequence of nominals as follows:
  \[
    i_0 \prec_\Theta i_1 \prec_\Theta i_2 \prec_\Theta \cdots .
  \]
  For $\Theta$ and its root formula $@_i \varphi$, we define $Q$ and $n$ as follows:
  \begin{align*}
      Q = &\{ \psi \mid \psi \text{ is a subformula of } \varphi \} \\
      \cup &\{ \psi \mid \psi \text{ is a subformula of } \Box \neg j \text{ where } j \text{ occurs in } @_i \varphi \}
  \end{align*}
  and $n$ is the number of elements in $Q$. Also, let $\Theta'$ be a fragment of $\Theta$ up to, but not including, the first occurrence of $i_{2^n + 1}$. Then $i_{2^n + 1}$ is generated by applying $[\Dia]$ to some $@_{i_{2^n}} \Dia \varphi$. Taking $\Drest$ into consideration, $i_{2^n}$ is a quasi-urfather.

  However, since all of $T^{\Theta'}(i_0), T^{\Theta'}(i_1), \ldots, T^{\Theta'}(i_{2^n})$ are subsets of $Q$ and $n$ is the cardinality of $Q$, there exists a pair $0 \leq l, m \leq 2^n$ such that $T^{\Theta'}(i_l) = T^{\Theta'}(i_m)$ by the pigeonhole principle. Therefore, we find the twins $i_l$ and $i_m$ such that $i_l, i_m \prec_\Theta^* i_{2^n}$, but that contradicts that $i_{2^n}$ is a quasi-urfather.
\end{proof}

\subsection{Completeness}

Once we construct a proof system, the next work is to show soundness and completeness. The soundness can be shown using \emph{faithful models} (see, for example, \cite{nishimura2024}). Then, we show that this tableau calculus is complete with respect to the class of SPO models.

The basic strategy is the same as in \cite[Section 5.2]{bolander2007} or \cite[Section 3.3]{nishimura2025}. That is, define \emph{identity urfathers}, representative nominals in a branch, and use them to create a model from an open saturated branch of a tableau.

\begin{definition}
    A branch $\Theta$ of a tableau is \emph{saturated} if every formula that can be generated by applying some rule already exists in $\Theta$. We call a tableau \emph{saturated} if all of its branches are saturated.
\end{definition}

If $\Theta$ is a saturated branch, the following conditions hold:
\begin{itemize}
    \item If $@_i (\varphi \land \psi) \in \Theta$, then $@_i \varphi, @_i \psi \in \Theta$.
    \item If $@_i \Dia \varphi \in \Theta$ which is not an accessibility formula and $i$ is a quasi-urfather, then there exists some $j \in \Nom$ such that $@_i \Dia j, @_j \varphi \in \Theta$.
    \item If $@_i \Box \varphi, @_i \Dia j \in \Theta$, then $@_j \varphi \in \Theta$ and $@_j \Box \varphi \in \Theta$.
    \item For all $i$ occurring in $\Theta$, we have $@_i \Box \neg i \in \Theta$.
\end{itemize}

\begin{definition}
  Let $\Theta$ be a tableau branch and $i$ a nominal occurring in $\Theta$. The \emph{identity urfather of $i$} on $\Theta$ (denoted by $v_\Theta(i)$) is the earliest introduced nominal $j$ satisfying the following conditions:
  \begin{enumerate}
    \item $j$ is a twin of $i$.
    \item $j$ is a quasi-urfather.
  \end{enumerate}
  Moreover, we say that a nominal $i$ is an \emph{identity urfather} on $\Theta$ if there is a nominal $j$ such that $v_\Theta(j) = i$.
  \label{defiur}
\end{definition}

Since not all nominals occurring in a branch have their identity urfather, we write $i \in \dom(v_\Theta)$ if $v_\Theta(i)$ exists for a nominal $i$.

We show some properties about identity urfathers. Since their proof is the same as that in \cite[Lemma 5-9]{nishimura2025}, only the statements of the lemmas are described.

\begin{lemma}
    Let $\Theta$ be a branch of a tableau. Then, the following statements hold:
    \begin{enumerate}[(a)]
        \item If $i$ occurs in the root formula $@_{i_0} \varphi_0$ of $\Theta$, then $i \in \dom(v_\Theta)$. \label{lemrightiur} 
        \item If $i$ is a quasi-urfather on $\Theta$ and $i \prec_\Theta j$, then $j \in \dom(v_\Theta)$. \label{lemiursucc}
        \item If $@_i \varphi \in \Theta$ is a prefixed subformula of the root formula of $\Theta$ and $i \in \dom(v_\Theta)$, then $@_{v_\Theta(i)} \varphi \in \Theta$. \label{lemiurcl}
        \item A nominal $i$ is the identity urfather if and only if $v_\Theta(i) = i$. \label{lemiurid}
    \end{enumerate}
    Moreover, if $\Theta$ is saturated, the following statement holds.
    \begin{enumerate}[(a)]
        \setcounter{enumi}{4}
        \item If $@_i j \in \Theta$ and $i, j \in \dom(v_\Theta)$, then $v_\Theta(i) = v_\Theta(j)$. \label{lemiureq}
    \end{enumerate}
    \label{lempropiur}
\end{lemma}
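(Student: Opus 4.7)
The plan is to prove (a)--(e) in order, leaning on two structural observations about the branch $\Theta$. First, twinship is an equivalence relation on the nominals of $\Theta$, since $T^\Theta(i) = T^\Theta(j)$ is literally equality of sets. Second, being a quasi-urfather is inherited downward along $\prec_\Theta^*$: if $i$ is a quasi-urfather and $l \prec_\Theta^* i$, then the set of $\prec_\Theta^*$-ancestors of $l$ is contained in that of $i$, so no twin pair can appear among $l$'s ancestors either.

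For (a), a nominal $i$ occurring in the root formula is not generated by any other nominal, so by Lemma~\ref{lemG} its only $\prec_\Theta^*$-ancestor is itself; hence $i$ is trivially a quasi-urfather and a twin of itself, so $v_\Theta(i)$ is defined. For (b), I will split on whether $j$ itself is a quasi-urfather. If so, $v_\Theta(j) = j$. Otherwise there are twins $k \neq l$ with $k, l \prec_\Theta^* j$; the tree structure of $\prec_\Theta$ together with $i \prec_\Theta j$ yields $\{m : m \prec_\Theta^* j\} = \{m : m \prec_\Theta^* i\} \cup \{j\}$, and since $i$ is a quasi-urfather one of $k, l$ must equal $j$ while the other, say $l$, satisfies $l \prec_\Theta^* i$. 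Then $l$ is a twin of $j$ and, by the hereditary observation, a quasi-urfather, so $v_\Theta(j)$ exists.

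For (c), since $v_\Theta(i)$ is a twin of $i$ we have $T_1^\Theta(i) = T_1^\Theta(v_\Theta(i))$; the hypothesis places $\varphi$ in the former, hence in the latter, which by Definition~\ref{defT} gives $@_{v_\Theta(i)} \varphi \in \Theta$. For (d), $v_\Theta(i) = i$ immediately makes $i$ an identity urfather by taking $j = i$; conversely, if $v_\Theta(j) = i$ then $i$ is a quasi-urfather twin of itself, so $v_\Theta(i)$ is introduced no later than $i$, and any strictly earlier candidate would also qualify for $v_\Theta(j)$ and undermine the minimality of $v_\Theta(j) = i$.

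The place where saturation genuinely enters, and which I expect to need the most care, is (e). From $@_i j \in \Theta$ I would derive $@_i i$ via $[\reflexivity]$ and then $@_j i$ via $[\Id]$; saturation then lets $[\Id]$ shuttle any prefixed subformula of the root (or of any $@_k \Box \neg k$) between the prefixes $i$ and $j$, giving $T^\Theta(i) = T^\Theta(j)$. The subtlety here is the side condition $*3$ on $[\Id]$ forbidding its application to accessibility formulas; since accessibility formulas belong to neither $T_1^\Theta$ nor $T_2^\Theta$, this restriction does not obstruct the argument. Concluding that $i$ and $j$ are twins, the earliest quasi-urfather in their shared twin class is unique, so $v_\Theta(i) = v_\Theta(j)$.
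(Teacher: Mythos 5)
Your proof is correct, but note that the paper itself does not prove Lemma \ref{lempropiur} at all: it states the five claims and defers entirely to \cite[Lemma 5--9]{nishimura2025}, so what you have produced is a self-contained argument where the paper offers only a citation. Your two structural observations are exactly the right scaffolding: twinship is an equivalence relation, and quasi-urfatherhood is inherited downward along $\prec_\Theta^*$ because the ancestor set of $l$ is contained in that of $i$ whenever $l \prec_\Theta^* i$. Parts (a) and (b) correctly exploit the forest structure of $G^\Theta$ (Lemma \ref{lemG}) together with the fact that a nominal occurring in the root formula is present from the first line of the branch and hence can never be introduced as a fresh nominal by $[\Dia]$, so it has no $\prec_\Theta^*$-ancestor other than itself; in (b) the case split on whether $j$ is a quasi-urfather, and the identification of the offending twin pair as $\{j, l\}$ with $l \prec_\Theta^* i$, is exactly what is needed. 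Part (c) works because membership of a formula in $T_1^\Theta$ versus $T_2^\Theta$ is determined by the formula's shape alone, so equality of $T^\Theta(i)$ and $T^\Theta(v_\Theta(i))$ transfers $@_{v_\Theta(i)}\varphi$ into $\Theta$. Part (e) correctly locates where saturation enters ($[\reflexivity]$ gives $@_i i$, then $[\Id]$ gives $@_j i$, and $[\Id]$ shuttles every $T^\Theta$-relevant formula both ways), and you are right that side condition $*3$ is harmless since accessibility formulas contribute nothing to $T^\Theta$ (their target nominal is fresh, hence absent from the root formula). The only blemish is in (b): when $j$ is itself a quasi-urfather you assert $v_\Theta(j) = j$, but $v_\Theta(j)$ is the \emph{earliest introduced} quasi-urfather twin of $j$, which may predate $j$; all that follows, and all that is needed, is that $v_\Theta(j)$ exists.
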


Now, we construct a model from a tableau with an open branch.   

\begin{definition}
    Given an open saturated branch $\Theta$ with a root formula $@_{i_0} \varphi_0$ of a tableau in $\tableauIF$, a model $\Kmodel^\Theta = (W^\Theta, R^\Theta, V^\Theta)$ is defined as follows:
    \begin{align*}
        W^\Theta &= \{ i \mid i \text{ is an identity urfather on } \Theta \} \\
        R_e^\Theta &= \{ (v_\Theta(i), v_\Theta(j)) \mid @_i \Dia j \in \Theta \text{ and } i, j \in \dom(v_\Theta) \} \\
        R^\Theta &= (R_e^\Theta)^+ \\
        V^\Theta(p) &= \{ v_\Theta(i) \mid @_i p \in \Theta \},\text{ where } p \in \Prop \\
        V^\Theta(i) &=
        \begin{cases}
            \{ v_\Theta(i) \} & \text{if } i \in \dom(v_\Theta) \\
            \{ i_0 \} & \text{otherwise,}
        \end{cases}
        \text{ where } i \in \Nom.
    \end{align*}
    Here, $R^+$ means the transitive closure of $R$.
    \label{defI4tableaumodel}
\end{definition}

Then we obtain the following lemma, which ensures that if we have a tableau with a root formula $@_{i_0} \varphi_0$ one of whose branches are open and saturated, there exists a model which falsifies $\varphi_0$.

\begin{lemma}
    Let $\Theta$ be an open saturated branch of a tableau in $\tableauIF$ and $@_i \varphi$ be a prefixed subformula of the root formula $@_{i_0} \varphi_0$ of $\Theta$, where $i \in \dom(v_\Theta)$. Then we have the following proposition:
    \[
        \text{if } @_i \varphi \in \Theta, \text{ then } \Kmodel^\Theta, v_\Theta(i) \models \varphi.
    \]
    In particular, $\Kmodel^\Theta, v_\Theta(i_0) \models \varphi_0$.
    \label{lemmodelexistI4}
\end{lemma}
\begin{proof}
    By induction on the complexity of $\varphi$. Most of the proof is the same as in \cite{nishimura2025}, so we only show the case of $\varphi = \Box \psi$ as it differs from the previous proof.

    Suppose that $@_i \Box \psi \in \Theta$, and take a nominal $j \in \Nom$ such that $v_\Theta(i) R^\Theta j$. (If no such nominal exists, $\Kmodel^\Theta, v_\Theta(i) \models \Box \psi$ is straightforward.) Then by the definition of $R^\Theta$, there are nominals $i_1, i_2, \ldots, i_n$ such that 
    \[
        v_\Theta(i) R_e^\Theta i_1 R_e^\Theta i_2 R_e^\Theta \cdots R_e^\Theta i_n R_e^\Theta j . 
    \]
    By the definition of $R_e^\Theta$, there are nominals $j_0, j_1, \ldots, j_n, k_1, k_2, \ldots, k_{n+1}$ such that $v_\Theta(j_0) = v_\Theta(i)$, $v_\Theta(j_m) = v_\Theta(k_m) = i_m$ for all $1 \leq m \leq n$, $v_\Theta(k_{n+1}) = j$, and 
    \[
        @_{j_0} \Dia k_1, @_{j_1} \Dia k_2, \ldots, @_{j_n} \Dia k_{n+1} \in \Theta .
    \]
    From $v_\Theta(j_0) = v_\Theta(i)$, we have $@_{j_0} \Box \psi \in \Theta$. Since $\Theta$ is saturated, we have $@_{k_1} \Box \psi \in \Theta$ by $[\trans]$. Then, $@_{j_1} \Box \psi \in \Theta$ holds since $j_1$ and $k_1$ are twins.
    Repeating this, we have $@_{j_1} \Box \psi, @_{j_2} \Box \psi, \ldots, @_{j_n} \Box \psi \in \Theta$, and finally by $[\Box]$, $@_{k_{n+1}} \psi \in \Theta$. By the induction hypothesis, we obtain that $\Kmodel^\Theta, j \models \psi$. Since we pick $j$ arbitrarily, we have $\Kmodel, v_\Theta(i) \models \Box \psi$.
\end{proof}

From the construction, the transitivity of a model $\Kmodel^\Theta$ made from an open saturated branch $\Theta$ is guaranteed. However, it is not the case that $\Kmodel^\Theta$ is always irreflexive. We can check this fact with a simple example.

\begin{example}
    Let $\Theta$ be a branch in Figure \ref{figtabsaturated}. Note that the root formula of $\Theta$ is the same as that of Figure \ref{figtabtrans}; however, it is open and saturated and terminates owing to $\Drest$ since $j$ and $k$ are twins (we can check that $T^\Theta(j) = T^\Theta(k) = \{ p, \Dia p, \Box \Dia p, \neg i, \Box \neg i \}$). From the branch, we can construct a model $\Kmodel^\Theta = (W^\Theta, R^\Theta, V^\Theta)$ as follows:
    \begin{align*}
        W^\Theta &= \{ i, j \} \\
        R^\Theta &= \{ (i, j), (j, j) \} \\
        V^\Theta(p) &= \{ j \} \\
        V^\Theta(i) &= \{ i \}.
    \end{align*}
    However, this model has the reflexive world $j$. Figure \ref{figrefpoint} shows it graphically.
    \begin{figure}[tbp]
      \begin{align*}
        &1. \ @_i (\Dia p \land \Box \Dia p) & \\
        &2. \ @_i i &[\reflexivity] \\
        &3. \ @_i \Box \neg i &[\irr] \\
        &4. \ @_i \Dia p &[\land] \\
        &5. \ @_i \Box \Dia p &[\land] \\
        &6. \ {@_i \Dia j}^* &[\Dia] \\
        &7. \ @_j p &[\Dia] \\
        &8. \ @_j j &[\reflexivity] \\
        &9. \ @_j \Box \neg j &[\irr] \\
        &10. \ @_j \neg i &[\Box] \\
        &11. \ @_j \Box \neg i &[\trans] \\
        &12. \ @_j \Dia p &[\Box] \\
        &13. \ @_j \Box \Dia p &[\trans] \\
        &14. \ {@_j \Dia k}^* &[\Dia] \\
        &15. \ @_k p &[\Dia] \\ 
        &16. \ @_k k &[\reflexivity] \\
        &17. \ @_k \Box \neg k &[\irr] \\
        &18. \ @_k \neg j &[\Box] \\
        &19. \ @_k \Box \neg j &[\trans] \\
        &20. \ @_k \neg i &[\Box] \\
        &21. \ @_k \Box \neg i &[\trans] \\
        &22. \ @_k \Dia p &[\Box] \\
        &23. \ @_k \Box \Dia p &[\trans] \\
      \end{align*}
      \caption{An open saturated branch of $\tableauIF$. Formulas with ${}^*$ are accessibility formulas.}
      \label{figtabsaturated}
    \end{figure}
    
    \begin{figure}[tbph]
        \centering
        \begin{tikzpicture}
            \node (w0) [state] {$i$};
            \node (w1) [state, right = of w0] {$j$};
            \path [->, >=stealth]
                (w0) edge (w1)
                (w1) edge [loop above] (w1);
        \end{tikzpicture}
        \caption{The model $\Kmodel^\Theta$ constructed from a branch $\Theta$ in Figure \ref{figtabsaturated}}
        \label{figrefpoint}
    \end{figure}
\end{example}

We observe that $@_k \neg j \in \Theta$ in the branch $\Theta$ in Figure \ref{figtabsaturated}. Since we create a model from an open saturated branch based on identity urfathers, we cannot distinguish twins that are not in the root formula. Then, we cannot rule out the possibility of a \emph{cluster} defined as follows being formed.

\begin{definition}
    Let $(W, R)$ be a transitive frame. A \emph{cluster} $C$ is a set such that $C \subseteq W$, $R$ over $C$ is a universal relation, and for any $D$ such that $C \subsetneq D \subseteq W$, $R$ over $D$ is not a universal relation. A cluster $C$ is \emph{simple} if $C$ has only one element, and \emph{proper} if it consists of more than one element. 
\end{definition}

To solve this problem, we bulldoze all the reflexive points and turn a frame into an irreflexive one. This method was initially used by Segerberg \cite{segerberg1971}. The procedure for bulldozing is as follows. First, we insert a strict partial order into each cluster, aligning the possible worlds in a single row. Then, we copy this row and concatenate it infinitely. As a result, all clusters are unraveled into irreflexive infinite chains.

For us to deal with hybrid logic, we have to make a slight modification to define the valuation function. Since this paper deals only with finite models, it suffices to use the set $\mathbb{N}$ of natural numbers to index clusters.

\begin{definition}
    Given a model $\Kmodel = (W, R, V)$, the bulldozed model $\Kmodel_B = (W_B, R_B, V_B)$ of $\Kmodel$ is defined as follows:
    \begin{enumerate}[1.]
        \item Index the clusters in $\Kmodel$ by $\mathbb{N}$, like $C_0, C_1, \ldots$.
        \item Insert an arbitrary strict total order $<_n$ in each cluster $C_n$.
        \item Define $C'_n = C_n \times \mathbb{N}$ (infinitely many copies of $C_n$).
        \item Define $W_B = W^- \cup \bigcup_{n \in \mathbb{N}} C'_n$, where $W^- = W \setminus \bigcup_{n \in \mathbb{N}} C_n$ is a set of irreflexive worlds.
        \item Define $\alpha: W_B \rightarrow W$ by $\alpha(w) = w$ if $w \in W^-$ and $\alpha((w, m)) = w$ otherwise.
        \item Define $R_B$. $w R_B v$ if and only if one of the following conditions holds:
        \begin{itemize}
            \item $w \in W^-$ or $v \in W^-$, and $\alpha(w) R \alpha(v)$ (A relation including irreflexive worlds is preserved.)
            \item $w \in C'_m$, $v \in C'_n$, $m \neq n$, and $\alpha(w) R \alpha(v)$ (A relation between different clusters is preserved.)
            \item $w = (w', m)$ and $v = (v', n)$ are in the same $C'_l$ and: 
            \begin{itemize}
                \item $m < n$ in the usual order $<$ of $\mathbb{N}$ or
                \item $m = n$ and $w <_l v$.
            \end{itemize}
        \end{itemize}
        \item Define $V_B: \Prop \cup \Nom \rightarrow \mathcal{P}(W_B)$ as follows:
        \begin{itemize}
            \item If $p \in \Prop$, then $w \in V_B(p)$ if and only if $\alpha(w) \in V(p)$.
            \item If $i \in \Nom$, then
            \[
                V_B(i) =
                \begin{cases}
                    \{ (i^V, 0) \} &\text{if } i^V \text{ is in some } C_n \\
                    \{ i^V \} &\text{otherwise}.
                \end{cases}
            \]
            That is, a nominal is true only at the beginning of one of the copies.
        \end{itemize}
    \end{enumerate}
    \label{defbulldoze}
\end{definition}

Observe that this procedure makes the model both irreflexive and transitive.

\begin{lemma}
    If a model $\Kmodel = (W, R, V)$ is transitive, then the bulldozed one $\Kmodel_B = (W_B, R_B, V_B)$ is irreflexive and transitive.
    \label{lembulldozing}
\end{lemma}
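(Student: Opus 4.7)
The plan is to establish both properties by case analysis on the three clauses defining $R_B$. It is convenient to call a \emph{piece} either a singleton $\{x\}$ with $x \in W^-$ or a full copied cluster $C'_m$; then every element of $W_B$ lies in exactly one piece, and $R_B$ is generated by the ``different-piece'' clauses (1 and 2 in Definition \ref{defbulldoze}) and the lexicographic ``same-piece'' clause (3).

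For irreflexivity, fix $w \in W_B$. If $w \in W^-$, then $w$ is not contained in any $C_n$; in particular $\neg(wRw)$, for otherwise $\{w\}$ would already be an equivalence class and hence contained in some cluster. So clause 1, the only applicable one, fails. If $w = (w',k) \in C'_l$, clauses 1 and 2 are ruled out by the piece structure, while clause 3 demands $k<k$ or $w' <_l w'$, both false since $<_l$ is strict.

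For transitivity, suppose $w R_B v$ and $v R_B u$, and split into four cases by whether each edge uses a different-piece clause or the same-piece clause. When both edges are same-piece, $w,v,u$ all lie in one $C'_l$, and one only needs transitivity of the lexicographic order on $\mathbb{N}\times C_l$. When exactly one edge is same-piece---say $v,u\in C'_l$ and $w$ is in a different piece---the argument uses that $\alpha(v),\alpha(u)\in C_l$ forces $\alpha(v)R\alpha(u)$ in $\Kmodel$, combines this with $\alpha(w)R\alpha(v)$ and transitivity of $R$ to obtain $\alpha(w)R\alpha(u)$, and concludes $wR_Bu$ by clause 1 or 2 (since $w\notin C'_l$). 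The symmetric situation is handled identically.

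The hard case will be when both edges are different-piece but $w$ and $u$ happen to lie in a common $C'_m$ with $v$ outside it: here clauses 1 and 2 do not apply, and clause 3 would require a lex comparison that is not forced by the hypotheses. My plan is to show this subcase is vacuous. From $\alpha(w),\alpha(u)\in C_m$ one has $\alpha(u)R\alpha(w)$, so $\alpha(v)R\alpha(u)R\alpha(w)$ yields $\alpha(v)R\alpha(w)$, placing $\alpha(w)$ and $\alpha(v)$ in mutual $R$-access; if $v\in C'_n$ this puts $\alpha(v)$ into the cluster $C_m$, contradicting $n\neq m$, and if $v\in W^-$ then $\alpha(w)R v R\alpha(w)R v$ together with transitivity gives $vRv$, contradicting that $W^-$ worlds are irreflexive. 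Once this subcase is eliminated, the remaining instance has $w$ and $u$ in different pieces and is closed by clause 1 or 2. Isolating and killing this problematic subcase is the main technical step; the rest is routine bookkeeping on the clause structure of $R_B$.
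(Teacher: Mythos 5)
Your proof is correct, and on the transitivity half it is in fact more careful than the paper's own argument. The paper splits only into ``all of $w,v,u$ lie in one $C'_n$'' versus ``not all in one $C'_n$'', and claims the latter case follows from transitivity of $R$ via the first two clauses of Definition~\ref{defbulldoze}; but when $w$ and $u$ share a copied cluster $C'_m$ while $v$ lies outside it, those clauses are inapplicable to the pair $(w,u)$, and transitivity of $R$ alone does not yield the lexicographic comparison demanded by the third clause. You correctly isolate exactly this configuration and show it is vacuous: mutual $R$-access between $\alpha(w)$ and $\alpha(v)$ would pull $\alpha(v)$ into the cluster $C_m$, or force a point of $W^-$ to be reflexive, both impossible. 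That vacuity argument is the right fix; note that it (like the paper's own use of clusters elsewhere, e.g.\ in Lemma~\ref{lemclusternotnamed}) tacitly reads ``cluster'' as a maximal set on which $R$ is universal, so that any two members are mutually related. Two minor points: the degenerate instance $w=u\in W^-$, with both edges via the first clause, falls outside your four cases as literally stated, since $w$ and $u$ then share a singleton piece rather than a common $C'_m$; but your own style of argument --- $\alpha(w)\,R\,\alpha(v)\,R\,\alpha(w)$ gives $w R w$, contradicting $w\in W^-$ --- disposes of it. The irreflexivity half coincides with the paper's proof.
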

\begin{proof}
    First, we show that $\Kmodel_B$ is irreflexive. If $w \in W^-$, not $w R_B w$ follows from the construction. Then, it suffices to show that $w R_B w$ does not hold for any $w = (w', n) \ (w' \in C_m, n \in \mathbb{N})$ by \textit{reductio ad absurdum}. Suppose that $(w', n) R_B (w', n)$. By the definition of $R_B$ and the fact that $n = n$, we have $w' <_m w'$, which contradicts the relation $<_m$ being a strict total order. 

    Next, we show that $\Kmodel_B$ is transitive. Suppose $w R_B v$ and $v R_B u$. In the cases that not all $w, v, u$ belong to the same set $C'_n$, we can show $w R_B u$ with the transitivity of $R$. For example, let all $w, v, u$ belong to different clusters. Then we have $\alpha(w) R \alpha(v)$ and $\alpha(v) R \alpha(u)$ by assumption. Since $R$ is transitive, we have $\alpha(w) R \alpha(u)$, which leads us to the conclusion. The case to watch out for is when all $w, v, u$ belong to the same set $C'_n$. In this case, we can assume that $w = (w', a), v = (v', b), u = (u', c) \ (w', v', u' \in C_n, a, b, c \in \mathbb{N})$. Since $w R_B v$ and $v R_B u$, we have $a \leq b \leq c$. If $a \neq c$ then $a < c$, hence $w R_B u$. If $a = b = c$, we have $w' <_n v' <_n u'$ by the assumptions. Then it follows that $w R_B u$.
\end{proof}

\begin{example}
    Let $\Kmodel$ be a model illustrated in Figure \ref{figrefpoint}. This model has one (simple) cluster consisting of only one world. Then, the bulldozed model $\Kmodel_B$ in Figure \ref{figbulldozed} has an infinitely long chain of worlds, $(j, 0), (j, 1), \ldots$. Note that a nominal $j$ is true only in the world $(j, 0)$. 
\end{example}

\begin{figure}[tb]
    \centering
    \begin{tikzpicture}
        \node (w0) [state, inner sep = 8.8pt] {$i$};
        \node (w1) [state, right = of w0] {$(j, 0)$};
        \node (w2) [state, right = of w1] {$(j, 1)$};
        \node (w3) [state, right = of w2] {$(j, 2)$};
        \node (inf) [right = of w3] {$\cdots$};
        \path [->, >=stealth]
            (w0) edge (w1)
            (w1) edge (w2)
            (w2) edge (w3)
            (w3) edge (inf);
    \end{tikzpicture}
    \caption{The bulldozed model $\Kmodel_\Theta$ based on the model in Figure \ref{figrefpoint}}
    \label{figbulldozed}
\end{figure}

The bulldozing method lets us make the SPO model. However, a bulldozed model does not completely inherit the properties of the original one. For example, in the model of Figure \ref{figrefpoint}, the formula $\Dia j$ is true at the point $j$, but it cannot be true at all the points $(j, n)$ where $n \in \mathbb{N}$ in the model of Figure \ref{figbulldozed}, though every $(j, n)$ was made from the world $j$. 

However, when we construct and transform the model $\Kmodel^\Theta$, we are only interested in the truth value of the root formula of $\Theta$ and its prefixed subformulas. In such a limited situation, the bulldozing method works well.

First, we distinguish the possible worlds in a model constructed from a tableau based on whether they are related to the nominal of the root formula. As a result, it follows that all possible worlds forming a cluster are irrelevant to any nominal contained in the root formula.

\begin{lemma}
    Let $C_n$ be a cluster of $W^\Theta$. If $i \in C_n$, then every nominal $j$ such that $\Kmodel^\Theta, i \models j$ does not occur in the root formula of $\Theta$.
    \label{lemclusternotnamed}
\end{lemma}
\begin{proof}
    Since $i$ is in a cluster, we have $i R^\Theta i$.
    We assume the existence of a nominal $j$ occurring in the root formula such that $\Kmodel^\Theta, i \models j$ and derive a contradiction.

    If $C_n$ is simple, then we have $i R_e^\Theta i$. Thus, there are nominals $k, l$ such that $v_\Theta(k) = v_\Theta(l) = i$ and $@_k \Dia l \in \Theta$. Since $\Kmodel^\Theta, i \models j$ holds, we also have $v_\Theta(j) = i$, which means that all the $i, j, k, l$ are twins. Then, from $@_j j \in \Theta$ by $[\reflexivity]$, we have $@_k j, @_l j \in \Theta$. Then, we can show that $\Theta$ is closed because of it being saturated as follows: $@_k k \in \Theta$ by $[\reflexivity]$, $@_j k \in \Theta$ by $[\Id]$, $@_k \Box \neg k \in \Theta$ by $[\irr]$, $@_l \neg k \in \Theta$ by $[\Box]$, and $@_j \neg k \in \Theta$ by $[\Id]$. However, it contradicts the assumption that $\Theta$ is open. 

    Consider the other case in which $C_n$ is proper. Similarly to the former case, we have $v_\Theta(j) = i$. In this case, we have some worlds $i_1, \ldots, i_m$ such that $i R_e^\Theta i_1 R_e^\Theta \cdots R_e^\Theta i_m R_e^\Theta i$. From $i R_e^\Theta i_1$, there are $k, l_1$ such that $v_\Theta(k) = i$, $v_\Theta(l_1) = i_1$, and $@_k \Dia l_1 \in \Theta$. From these facts and the fact that $\Theta$ is saturated, we can derive $@_{i_1} \Box \neg j \in \Theta$ as follows: $@_j \Box \neg j \in \Theta$ by $[\irr]$, $@_k \Box \neg j \in \Theta$ since $j$ and $k$ are twins, $@_{l_1} \Box \neg j \in \Theta$ by $[\trans]$, and $@_{i_1} \Box \neg j \in \Theta$ since $l_1$ and $i_1$ are twins. Repeating this argument, we have $@_{i_2} \Box \neg j, \ldots, @_{i_m} \Box \neg j \in \Theta$. Finally, from $i_m R_e^\Theta i$, we obtain $@_i \neg j \in \Theta$. However, we also have $@_i j \in \Theta$ from the fact that $v_\Theta(j) = i$. Therefore, $\Theta$ is closed, which is a contradiction.
\end{proof}

From this fact, if we consider only the prefixed subformulas of the root formula, the copies of a possible world in a cluster are equivalent.

\begin{lemma}
    Let $\Theta$ be an open saturated branch. For any nominal $i \in W^\Theta$ in some cluster $C_l$, formula $\varphi$ such that $@_i \varphi$ is a prefixed subformula of the root formula, and $m, n \in \mathbb{N}$, we have
    \[
        \Kmodel^\Theta_B, (i, m) \models \varphi \iff \Kmodel^\Theta_B, (i, n) \models \varphi.
    \]
    \label{lemclusterequiv}
\end{lemma}
\vspace{-2\baselineskip}
\begin{proof}
    By induction on the complexity of $\varphi$.
    We give the proof only for the cases $\varphi = j$, $\Dia \psi$, and $\Box \psi$. The other cases are left to the reader.
    \begin{description}
        \item[{$[\varphi = j]$}] Firstly, suppose that there is a nominal $j$ such that $\Kmodel^\Theta_B, (i, 0) \models j$. Then, we have $\Kmodel^\Theta, i \models j$. Since $j$ occurs in the root formula, $i$ does not belong to any cluster $C_l$ by Lemma \ref{lemclusternotnamed}, which is a contradiction.
        Moreover, $\Kmodel^\Theta_B, (i, n) \models j$ never holds if $n \geq 1$ from the way of bulldozing. 
        Therefore, for any $n \in \mathbb{N}$, we have $\Kmodel^\Theta_B, (i, n) \not \models j$.
        \item[{$[\varphi = \Dia \psi]$}] Suppose $\Kmodel^\Theta_B, (i, m) \models \Dia \psi$. Then, there exists $w \in W^\Theta_B$ such that $(i, m) R^\Theta_B w$ and $\Kmodel^\Theta_B, w \models \psi$. If $w$ is not in $C'_l$, then $(i, n) R^\Theta_B w$ immediately follows from the construction of $R^\Theta_B$, so $\Kmodel^\Theta_B, (i, n) \models \Dia \psi$ holds. If not, that is, $w \in C'_l$, then we can assume that $w = (w', k)$. If $w$ is also reachable from $(i, n)$, there is nothing to prove. Even if not, it follows that $\Kmodel^\Theta_B, (w', n+1) \models \psi$ from the induction hypothesis. Therefore, we have $\Kmodel^\Theta_B, (i, n) \models \Dia \psi$.
        \item[{$[\varphi = \Box \psi]$}] Suppose $\Kmodel^\Theta_B, (i, m) \models \Box \psi$. Take an arbitrary $w \in W^\Theta_B$ such that $(i, n) R^\Theta_B w$, and we show that $\Kmodel^\Theta_B, w \models \psi$. If $w$ does not belong to $C'_l$, we have $(i, m) R^\Theta_B w$, and the proof ends straightforwardly. If not, assume $w = (w', k)$. Suppose $w$ is not reachable from $(i, m)$. Even in this case, we have that $(i, m) R^\Theta_B (w', m+1)$. Then $\Kmodel^\Theta_B, (w', m+1) \models \psi$ holds and $\Kmodel^\Theta_B, (w', k) \models \psi$ follows by the induction hypothesis. \qedhere
    \end{description}
\end{proof}

Now, we show the equivalence of a model from a tableau branch and the bulldozed one up to the contents of the root formula.

\begin{lemma}
    Let $\Theta$ be an open saturated branch, $i$ be an identity urfather of $\Theta$, and $\varphi$ be a formula such that $@_i \varphi$ is a prefixed subformula of the root formula. Then we have
    \[
        \Kmodel^\Theta, i \models \varphi \iff \Kmodel^\Theta_B, i_B \models \varphi ,
    \]
    where
    \[
        i_B = 
        \begin{cases}
            (i, 0) &\text{if } i \text{ is in some } C_n \\
            i &\text{otherwise}.
        \end{cases}
    \]
    \label{lembulldozingpreserve}
\end{lemma}
\begin{proof}
    By induction on the complexity of $\varphi$.
    \begin{description}
        \item[{$[\varphi = p, \neg p]$}] Straightforward.
        \item[{$[\varphi = j, \neg j]$}] In the case $\varphi = j$, we can proceed as follows:
        \begin{align*}
            \Kmodel^\Theta, i \models j &\iff V^\Theta(j) = \{ i \} \\
            &\iff V^\Theta_B(j) = \{ i_B \} \\
            &\iff \Kmodel^\Theta_B, i_B \models j .
        \end{align*}
        We can do similarly in the case $\varphi = \neg j$.
        \item[{$[\varphi = \psi \land \chi, \psi \lor \chi]$}] Straightforward.
        \item[{$[\varphi = \Dia \psi]$}] First, we show the left-to-right direction. Suppose $\Kmodel^\Theta, i \models \Dia \psi$. Then, there exists $j \in W^\Theta$ such that $i R^\Theta j$ and $\Kmodel^\Theta, j \models \psi$. If $i$ and $j$ are not in the same cluster, then by the construction of $R^\Theta_B$, we have $i_B R^\Theta_B j_B$. Also, $\Kmodel^\Theta_B, j_B \models \psi$ follows from the induction hypothesis. Hence, $\Kmodel^\Theta_B, i_B \models \Dia \psi$ holds. Suppose $i$ and $j$ are in the same cluster (it can be true that $i = j$) and $i_B R^\Theta_B j_B$ fails. In this case, though, we see that $(i, 0) R^\Theta_B (j, 1)$ holds. Also, we have $\Kmodel^\Theta_B, (j, 0) \models \psi$ by the induction hypothesis, and we obtain $\Kmodel^\Theta_B, (j, 1) \models \psi$ by Lemma \ref{lemclusterequiv}. Therefore, we have $\Kmodel, (i, 0) \models \Dia \psi$. 

        The proof of the other direction is straightforward from the fact that for any $i, j$, $i R^\Theta_B j$ implies $\alpha(i) R^\Theta \alpha(j)$. 
        \item[{$[\varphi = \Box \psi]$}] First, to show the left-to-right direction, suppose that $\Kmodel^\Theta, i \models \Box \psi$. Take one $j$ such that $i_B R^\Theta_B j$, and we show that $\Kmodel^\Theta_B, j \models \psi$.

        From $i_B R^\Theta_B j$, we have $i R^\Theta \alpha(j)$. Then, we have $\Kmodel^\Theta, \alpha(j) \models \psi$. If $\alpha(j)$ does not belong to any clusters, we have $\Kmodel^\Theta_B, j \models \psi$ by induction hypothesis. If not, $j$ must be the form $j = (j', n)$. By induction hypothesis, we have $\Kmodel^\Theta_B, (j', 0) \models \psi$. However, by Lemma \ref{lemclusterequiv}, we obtain $\Kmodel^\Theta_B, (j', n) \models \psi$.

        The other direction is like the right-to-left direction in the case $\varphi = \Dia \psi$. 
        \item[{$[\varphi = @_j \psi]$}] This proof proceeds directly. \qedhere
    \end{description}
\end{proof}

\begin{theorem}
    The tableau calculus $\tableauIF$ is complete for the class of all SPO models.
    \label{thmcompleteI4}
\end{theorem}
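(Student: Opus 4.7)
The plan is to argue by contrapositive: assuming $\varphi$ is not provable in $\tableauIF$, we build an SPO countermodel. I would start with the tableau whose root formula is $@_{i_0} \varphi'$, where $\varphi'$ is an NNF of $\neg \varphi$ and $i_0$ is fresh. By the termination property (Theorem \ref{thmtermI4}) this tableau is finite, and since $\varphi$ is not provable, some branch $\Theta$ is open. Because no rule can add any new formula to $\Theta$, the branch is saturated in the sense defined after Definition \ref{defI4tableaumodel}.

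Next I would construct the model $\Kmodel^\Theta = (W^\Theta, R^\Theta, V^\Theta)$ as in Definition \ref{defI4tableaumodel} and then bulldoze it to obtain $\Kmodel^\Theta_B$ in the sense of Definition \ref{defbulldoze}. Lemma \ref{lembulldozing} immediately yields that $\Kmodel^\Theta_B$ is irreflexive and transitive, so it is an SPO model. It remains to show that $\Kmodel^\Theta_B$ falsifies $\varphi$ somewhere. By Lemma \ref{lempropiur}(\ref{lemrightiur}), since $i_0$ occurs in the root formula, $i_0 \in \dom(v_\Theta)$; and by Lemma \ref{lempropiur}(\ref{lemiurid}), $v_\Theta(i_0)$ is itself an identity urfather. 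By Lemma \ref{lemiurcl}, $@_{v_\Theta(i_0)} \varphi' \in \Theta$, and $\varphi'$ is trivially a prefixed subformula of the root formula. Applying Lemma \ref{lemmodelexistI4} gives $\Kmodel^\Theta, v_\Theta(i_0) \models \varphi'$, and then Lemma \ref{lembulldozingpreserve} transfers this to $\Kmodel^\Theta_B, (v_\Theta(i_0))_B \models \varphi'$. Hence $\Kmodel^\Theta_B$ is an SPO countermodel to $\varphi$, so $\varphi$ is not SPO-valid.

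I do not expect any genuine obstacle in this final theorem: all the conceptual work has already been done in Lemmas \ref{lemmodelexistI4}, \ref{lembulldozing}, and \ref{lembulldozingpreserve}. The only subtle point to be careful about is verifying that the preconditions of Lemma \ref{lembulldozingpreserve} are met for the root formula, namely that $v_\Theta(i_0)$ is an identity urfather and that $\varphi'$ is a prefixed subformula of the root formula; both are immediate from Lemma \ref{lempropiur} and the definition of the root formula, as described above. The proof can therefore be presented as a short chain of citations to the preceding lemmas.
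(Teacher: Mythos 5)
Your proposal is correct and follows essentially the same route as the paper's own proof: contraposition, extraction of an open saturated branch, the model construction of Definition \ref{defI4tableaumodel}, and then the chain of Lemmas \ref{lemmodelexistI4}, \ref{lembulldozing}, and \ref{lembulldozingpreserve}. The only difference is that you spell out the precondition checks via Lemma \ref{lempropiur}, which the paper leaves implicit; this is a harmless (indeed helpful) elaboration rather than a different argument.
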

\begin{proof}
    We show the contraposition. Suppose $\varphi$ is not provable in $\tableauIF$. Then we can find a finite tableau in $\tableauIF$, whose root formula is $@_i \neg \varphi$, where $i$ does not occur in $\varphi$. Thus, we can make an open and saturated branch $\Theta$. Then, by Lemma \ref{lemmodelexistI4}, we have $\Kmodel^\Theta, v_\Theta(i) \models \neg \varphi$. Moreover, by Lemma \ref{lembulldozing} and Lemma \ref{lembulldozingpreserve}, we obtain an SPO model $\Kmodel^\Theta_B$ and its world $(v_\Theta(i))_B$ that falsify $\varphi$. 
\end{proof}

By utilizing the termination, soundness, and completeness of the tableau calculus, we can provide a proof for the decidability of the logic $\mathbf{I4}$ that is distinct from the one in \cite{blackburn1993}.
Given a formula $\varphi$, we can create a tableau whose root formula is $@_i \varphi$ where $i$ is not in $\varphi$.
We can make all the branches of it closed or saturated.
Owing to Theorem \ref{thmtermI4}, this procedure ends in finite time. If the tableau is closed, then $\varphi$ is provable.
If not, we can find an open saturated branch.
Then, there exists an SPO model (note that it might be infinite) that falsifies $\varphi$ by the proof of Theorem \ref{thmcompleteI4}, which means that $\varphi$ is not provable by soundness.
For the remaining systems introduced hereafter, decidability is similarly derived as a corollary.

\section{For Unbounded Strict Partial Order}
\label{secUSPO}

To deal with models with an unbounded strict partial order, we add an additional inference rule $[\ser]$. To begin with, we observe that rule $[\ser]$ reflects seriality using a simple example. Figure \ref{figtabserial} is the proof of the D axiom $\Box p \rightarrow \Dia p$ in the tableau calculus $\tableau$ with $[\ser]$.

\begin{figure}[tbp]
  \begin{align*}
    &1. \ @_i (\Box p \land \Box \neg p) & \\
    &2. \ @_i \Dia j &[\ser] \\
    &3. \ @_i \Box p &[\land] \\
    &4. \ @_i \Box \neg p &[\land] \\
    &5. \ @_j p &[\Box] \\
    &6. \ @_j \neg p &[\Box] \\
    &\quad \text{\ding{55}}
  \end{align*}
  \caption{A tableau proving the D axiom in $\tableau + [\ser]$}
  \label{figtabserial}
\end{figure}

\begin{definition}
    The tableau calculus $\tableauIFD$ is constructed by adding the rule $[\ser]$ to $\tableauIF$.
\end{definition}

The proof of the termination property and completeness can be constructed by modifying some points in that $\tableauIF$. First, for the termination proof, we change the clause of the subformula property slightly.

\begin{lemma}
  Let $\Theta$ be a branch of a tableau. For every formula of the form $@_i \varphi$ in $\Theta$, one of the following conditions holds:
  \begin{itemize}
      \item It is a prefixed subformula of the root formula of $\Theta$.
      \item It is an accessibility formula.
      \item It is a prefixed subformula of $@_j \Box \neg j$ for some $j$.
      \item It is a prefixed subformula of $@_j \Dia k$ for some $j, k$.
  \end{itemize}
  \label{lemquasiI4D}
\end{lemma}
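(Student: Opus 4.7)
The plan is to prove the lemma by induction on the number of rule applications used to construct $\Theta$, closely following the pattern established in the proof of Lemma \ref{lemquasi}. Since the new fourth disjunct only expands the conclusion without weakening any premise, every case treated in that earlier proof---namely the base case (the root formula, which is a prefixed subformula of itself) and the inductive cases for $[\reflexivity]$, $[\land]$, $[\lor]$, $[\Dia]$, $[\Box]$, $[@]$, $[\Id]$, $[\irr]$, and $[\trans]$---carries over essentially verbatim. The only genuinely new case is $[\ser]$.

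For the $[\ser]$ case, the rule introduces a formula of the form $@_i \Dia j$, where $i$ is a nominal already occurring in $\Theta$ and $j$ is a fresh nominal. This formula is trivially a prefixed subformula of $@_i \Dia j$ itself, so it falls directly under the fourth clause, with the $j$ and $k$ of that clause instantiated as $i$ and $j$. No appeal to the inductive hypothesis is required for this step.

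The main obstacle---though a minor one---is verifying that the four-clause invariant is preserved when later rules act on formulas originating in the fourth clause. The only non-trivial subformulas of $@_j \Dia k$ (with $k \in \Nom$) are $@_j \Dia k$ itself and the nominal equation $@_j k$, so the only rules that can fire on such atoms are $[\Id]$ on $@_j k$ and possibly $[\Dia]$ on $@_j \Dia k$ (the latter is not excluded by restriction $\ast 3$ of $[\Dia]$, since $\ast 3$ only blocks accessibility formulas generated by $[\Dia]$ itself). In each subcase, the derived formula either remains a prefixed subformula of $@_j \Dia k$ (hence still in the fourth clause) or is itself an accessibility formula generated by $[\Dia]$ (hence in the second clause), so the classification survives.
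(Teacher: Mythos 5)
Your proposal is correct and matches the argument the paper intends: the paper states Lemma \ref{lemquasiI4D} without an explicit proof, presenting it as a slight modification of Lemma \ref{lemquasi}, whose proof is exactly the induction on the number of rule applications you carry out, with the fourth clause added precisely to absorb the premise-free rule $[\ser]$. Your additional check that later rules acting on $@_j \Dia k$ and $@_j k$ preserve the invariant is sound (the conclusions of $[\Box]$ and $[\trans]$ land in the clause of their $\Box$-premise, and $[\Dia]$ produces either an accessibility formula or another prefixed subformula of $@_j \Dia k$), so nothing is missing.
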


Moreover, the addition of $[\ser]$ may affect the proof of Lemma \ref{lemG}. Here, we say that all the trees in $G^\Theta$ are finitely branching, but this may be broken by adding formulas with $\Dia$, like $@_i \Dia j$. However, $[\ser]$ only allows adding such a formula once for each nominal. Therefore, in the end, each tree in $G^\Theta$ is guaranteed to be finitely branching.

Finally, we obtain the termination property for $\tableauIFD$.

\begin{theorem}
    The tableau calculus $\tableauIFD$ has the termination property.
    \label{thmtermI4D}
\end{theorem}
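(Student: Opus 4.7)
The plan is to mimic the proof of Theorem \ref{thmtermI4} by \emph{reductio ad absurdum}. Suppose $\Theta$ is an infinite branch of $\tableauIFD$ with root formula $@_i \varphi$. First, I verify that the structural observations transfer: taking $\prec_\Theta$ to count both $[\Dia]$- and $[\ser]$-generated edges, the graph $G^\Theta$ remains a finite disjoint union of well-founded, finitely branching trees (well-foundedness is immediate, and finite branching survives because $[\ser]$ adds at most one child per nominal by condition *3, in addition to the finitely many children already contributed by $[\Dia]$). Hence the analogue of Lemma \ref{leminf} yields an infinite chain
\[
    i_0 \prec_\Theta i_1 \prec_\Theta i_2 \prec_\Theta \cdots .
\]

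Next, I run the same pigeonhole argument as in the proof of Theorem \ref{thmtermI4}. Define
\[
    Q = \{ \psi \mid \psi \text{ is a subformula of } \varphi \} \cup \{ \psi \mid \psi \text{ is a subformula of } \Box \neg j \text{ for some } j \text{ occurring in } \varphi \},
\]
and let $n = |Q|$. A key observation is that, although Lemma \ref{lemquasiI4D} admits a new fourth case of formulas (prefixed subformulas of $@_j \Dia k$), Definition \ref{defT} of $T^\Theta(i)$ is untouched, so $T^\Theta(i) \subseteq Q$ still holds. Let $\Theta'$ be the fragment of $\Theta$ up to but not including the first occurrence of $i_{2^n + 1}$. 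The pigeonhole principle then supplies $l < m \leq 2^n$ with $T^{\Theta'}(i_l) = T^{\Theta'}(i_m)$, so $i_l \neq i_m$ are twins satisfying $i_l, i_m \prec_\Theta^* i_{2^n}$, which prevents $i_{2^n}$ from being a quasi-urfather.

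Finally, since $i_{2^n + 1}$ was introduced from $i_{2^n}$ by either $[\Dia]$ or $[\ser]$, and both rules in $\tableauIFD$ operate under $\Drest$ (the seriality rule inheriting the same quasi-urfather restriction---without it, an infinite $[\ser]$-cascade of successively fresh nominals would already break termination), $i_{2^n}$ must be a quasi-urfather, giving the desired contradiction. The main obstacle I anticipate is precisely the handling of $[\ser]$: one must confirm that condition *3 together with $\Drest$ suffices to block such cascades, and verify that $[\ser]$-introduced formulas $@_j \Dia k$ do not inflate the finite set $Q$ controlling the pigeonhole bound. Once these bookkeeping points are secured, the remainder is a routine adaptation of the argument for $\tableauIF$.
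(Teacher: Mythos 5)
Your proposal is correct and follows the same route the paper takes: patch the subformula property (Lemma \ref{lemquasiI4D}), check that $G^\Theta$ stays a finite union of finitely branching trees because $[\ser]$ fires at most once per nominal, observe that $T^\Theta(i)$ (Definition \ref{defT}) is unchanged and hence still bounded by the same set $Q$, and rerun the pigeonhole argument of Theorem \ref{thmtermI4}. The one place you go beyond the paper is also the place where you are more careful than it is: you stipulate that $\prec_\Theta$ counts $[\ser]$-generated edges and that $[\ser]$ is itself subject to $\Drest$. Neither is stated in the paper --- Definition \ref{defgenerate} ties $\prec_\Theta$ to accessibility formulas, which are by definition produced by $[\Dia]$, and $\Drest$ as written constrains only $[\Dia]$. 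Your worry is legitimate: with $[\ser]$ unrestricted, one can apply it to the root nominal, then to the fresh nominal it introduces, and so on, producing an infinite branch that contains no infinite chain of accessibility formulas, so Lemma \ref{leminf} would not even detect it; and even if $[\ser]$-edges are folded into $\prec_\Theta$, the final step of the pigeonhole argument needs the generator of $i_{2^n+1}$ to have been a quasi-urfather, which only holds if $[\ser]$ obeys the same side condition as $[\Dia]$. Your fix (extend $\Drest$ to $[\ser]$, note that fresh $[\ser]$-nominals quickly become twins with empty $T$-sets so the cascade halts) is exactly what is needed, and it is consistent with the loop-check discipline of the calculi in the cited Bolander--Blackburn work; the paper's own discussion of Lemma \ref{lemG} silently presupposes it. So treat your ``anticipated obstacle'' not as a gap in your argument but as a clarification the calculus itself requires.
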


The next thing to do is to show the completeness. This can also be proved in the same way as the proof for the case of $\tableauIF$, but it is necessary to show the following two additional lemmas.

\begin{lemma}
    If $\Theta$ is an open saturated branch of a tableau in $\tableauIFD$, then the model $\Kmodel^\Theta$ defined in Definition \ref{defI4tableaumodel} is serial.
\end{lemma}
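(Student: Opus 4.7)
The plan is to exhibit, for each world $w \in W^\Theta$, an $R^\Theta$-successor. Fix an arbitrary $w \in W^\Theta$. By Lemma \ref{lempropiur}(\ref{lemiurid}), $w$ is an identity urfather with $v_\Theta(w) = w$, so $w$ occurs in $\Theta$ and is, in particular, a quasi-urfather. Since $\Theta$ is saturated and the rule $[\ser]$ must fire once per nominal appearing in the branch, there exists a fresh nominal $j$ such that $@_w \Dia j \in \Theta$.

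The next step is to verify that $j \in \dom(v_\Theta)$, which is exactly what the definition of $R_e^\Theta$ in Definition \ref{defI4tableaumodel} requires in order to install an edge out of $v_\Theta(w) = w$. The intuition is that $j$ is brand new when introduced and sits directly under the quasi-urfather $w$: no pair of distinct twins $a \neq b$ with $a, b \prec_\Theta^* j$ can exist that was not already $\prec_\Theta^*$-below $w$, so any obstruction to $j$ itself being a quasi-urfather would already obstruct $w$, contradicting the assumption on $w$. An analogue of Lemma \ref{lempropiur}(\ref{lemiursucc}) therefore yields $j \in \dom(v_\Theta)$, with $v_\Theta(j)$ being either $j$ itself or some earlier twin of $j$ that is a quasi-urfather.

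Having secured $j \in \dom(v_\Theta)$, Definition \ref{defI4tableaumodel} immediately gives $(w, v_\Theta(j)) = (v_\Theta(w), v_\Theta(j)) \in R_e^\Theta \subseteq R^\Theta$, and $v_\Theta(j) \in W^\Theta$ because identity urfathers are exactly the elements of $W^\Theta$. Thus every $w \in W^\Theta$ has at least one $R^\Theta$-successor, so $\Kmodel^\Theta$ is serial.

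The main obstacle I expect is purely a bookkeeping subtlety: under the literal reading of Definition \ref{defgenerate}, an \emph{accessibility formula} is one produced by $[\Dia]$, so a $[\ser]$-generated $@_w \Dia j$ does not, strictly speaking, record $w \prec_\Theta j$. The cleanest remedy is to extend the generation relation $\prec_\Theta$ (together with the associated termination and urfather lemmas, already implicitly extended in the treatment of Lemma \ref{lemG} for $\tableauIFD$) to cover $[\ser]$-applications as well. Once this uniform reading is in place, the argument above applies without further modification, and seriality drops out directly from saturation under $[\ser]$ together with $w$ being a quasi-urfather.
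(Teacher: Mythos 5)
Your overall plan is the right one, but the step on which everything hinges --- $j \in \dom(v_\Theta)$ for the $[\ser]$-witness $j$ --- is not actually established, and the way you propose to establish it would not work as stated. First, your inline argument (``any obstruction to $j$ being a quasi-urfather would already obstruct $w$'') is incorrect under your proposed extension of $\prec_\Theta$: the fresh nominal $j$ could itself become a twin of some $\prec_\Theta^*$-ancestor of $w$ (twinhood is determined by $T^\Theta$, which fills up as $[\Box]$, $[\trans]$, $[\Id]$, etc.\ fire on $j$), giving a pair of distinct twins below $j$ that does not obstruct $w$; so $j$ need not be a quasi-urfather, and you are left invoking an unproved ``analogue'' of Lemma \ref{lempropiur}(\ref{lemiursucc}). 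Second, redefining $\prec_\Theta$ is not mere bookkeeping: quasi-urfathers (Definition \ref{defqur}), hence the restriction $\Drest$, hence the calculus itself and the termination and urfather lemmas, are all parametrized by $\prec_\Theta$, so this ``remedy'' would force you to re-verify a substantial part of the development.

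The paper sidesteps the issue with one extra saturation step: the formula $@_i \Dia j$ produced by $[\ser]$ is \emph{not} an accessibility formula, so $[\Dia]$ is applicable to it (its prefix $i$ is a quasi-urfather), and saturation yields a genuine accessibility formula $@_i \Dia k$ together with $@_k j$. Then $i \prec_\Theta k$ holds under the unmodified Definition \ref{defgenerate}, Lemma \ref{lempropiur}(\ref{lemiursucc}) applies verbatim to give $k \in \dom(v_\Theta)$, and $i \mathrel{R^\Theta} v_\Theta(k)$ follows. (Alternatively, your direct route can be repaired without touching $\prec_\Theta$ by observing that no accessibility formula ever targets the $[\ser]$-nominal $j$ --- $[\Dia]$ only introduces fresh nominals --- so $j$ has no $\prec_\Theta$-ancestors besides itself and is vacuously a quasi-urfather, whence $j \in \dom(v_\Theta)$; but that observation is absent from your write-up.)
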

\begin{proof}
    Take one $i \in W^\Theta$. Since $i$ is an identity urfather, it is also a quasi-urfather. By our assumption, $\Theta$ is saturated, so there is some $j$ such that $@_i \Dia j \in \Theta$ by $[\ser]$. Then, we have $@_i \Dia k, @_k j \in \Theta$ for some $k$ by $[\Dia]$, which implies $i \prec_\Theta k$. This fact and Lemma \ref{lempropiur} (\ref{lemiursucc}) guarantee the existence of $v_\Theta(k)$. Therefore, we have $i R^\Theta v_\Theta(k)$.
\end{proof}

\begin{lemma}
    If $\Kmodel$ is serial, then so is the bulldozed $\Kmodel_B$.
\end{lemma}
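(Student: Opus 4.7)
The plan is to take an arbitrary world $w \in W_B$ and exhibit a successor under $R_B$, splitting into cases according to the construction of $W_B = W^- \cup \bigcup_{n} C'_n$.

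For $w \in W^-$, I would invoke the seriality of $\Kmodel$ to get some $v' \in W$ with $w R v'$, and then lift $v'$ to a suitable element of $W_B$. If $v' \in W^-$, set $v = v'$. Otherwise $v'$ lies in some cluster $C_n$, in which case set $v = (v', 0) \in C'_n$. In either subcase we have $\alpha(w) = w$ and $\alpha(v) = v'$, so $\alpha(w) R \alpha(v)$ holds; since $w \in W^-$, the first clause in the definition of $R_B$ yields $w R_B v$.

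For $w = (w', m) \in C'_n$, one does not even need to appeal to the seriality of $\Kmodel$: the fact that the third index set is $\mathbb{N}$ already provides an internal successor. Take $v = (w', m+1) \in C'_n$. Then $w$ and $v$ lie in the same copy-indexed set $C'_n$ with $m < m+1$ in the usual order on $\mathbb{N}$, so the third clause in the definition of $R_B$ gives $w R_B v$.

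Since every world of $W_B$ admits a successor in one of these two ways, $R_B$ is serial, and no further argument is needed. The only step requiring any care is the case split on whether $v'$ produced by seriality of $\Kmodel$ lies in $W^-$ or in some cluster, since this determines the representative to use in $W_B$; but this is purely bookkeeping and not a genuine obstacle.
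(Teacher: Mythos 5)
Your proof is correct and follows essentially the same argument as the paper: seriality gives a successor for worlds in $W^-$ (lifted back into $W_B$ by picking $v'$ itself or $(v',0)$), while worlds inside a copied cluster get the internal successor $(w', m+1)$. The only cosmetic difference is that you case-split before invoking seriality and note explicitly that the cluster case does not need it, which the paper leaves implicit.
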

\begin{proof}
    Take one $w \in W_B$. Then, by the seriality of $R$, there is some $v' \in W$ such that $\alpha(w) R v'$. If $\alpha(w) \in W^-$, we can find some $v \in W_B$ such that $\alpha(v) = v'$ and $w R v$. If not, $\alpha(w)$ belongs to a cluster, then $w$ has the form of $(\alpha(w), n)$, where $n \in \mathbb{N}$. By the definition of $R_B$, we have $(\alpha(w), n) R_B (\alpha(w), n+1)$.
\end{proof}

Owing to these lemmas, we can obtain a USPO model which falsifies $\varphi$ for any $\varphi$ which is not provable in $\tableauIFD$, which leads us to the completeness theorem.

\begin{theorem}
    The tableau calculus $\tableauIFD$ is complete for the class of all USPO models.
    \label{thmcompleteI4D}
\end{theorem}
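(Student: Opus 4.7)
The plan is to mirror the proof of Theorem \ref{thmcompleteI4} for the SPO case, replacing the invocation of Lemma \ref{lembulldozing} by a combination of Lemma \ref{lembulldozing} together with the two seriality lemmas just established. We prove the contrapositive: assume $\varphi$ is not provable in $\tableauIFD$, and construct a tableau whose root formula is $@_i \varphi'$, where $i$ does not occur in $\varphi$ and $\varphi'$ is an NNF of $\neg \varphi$. By the termination property (Theorem \ref{thmtermI4D}), this tableau is finite, and since $\varphi$ is not provable it cannot be closed; hence it contains an open branch $\Theta$ which, because no rule can be applied any further, is saturated.

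Next, I would argue that the model-existence lemma (Lemma \ref{lemmodelexistI4}) carries over to branches produced by $\tableauIFD$. The rule $[\ser]$ only contributes accessibility formulas of the form $@_i \Dia j$ for fresh $j$, so it does not affect the induction cases for $\Dia$, $\Box$, $@$, Boolean, or nominal atoms; the clause for $\Box$ in particular still goes through because the $[\trans]$ argument used there is unchanged. Applying this lemma to the branch $\Theta$ gives $\Kmodel^\Theta, v_\Theta(i) \models \varphi'$, i.e.\ a counter-world for $\varphi$ in the tableau model.

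Then I would bulldoze. By Lemma \ref{lembulldozing}, the bulldozed model $\Kmodel^\Theta_B$ is irreflexive and transitive. By the first of the two preceding lemmas in this section, $\Kmodel^\Theta$ is serial, and by the second, seriality is preserved under bulldozing, so $\Kmodel^\Theta_B$ is serial as well; consequently $\Kmodel^\Theta_B$ is a USPO model. Finally, Lemma \ref{lembulldozingpreserve} applies verbatim (it only depends on the prefixed-subformula structure and the definition of $\Kmodel_B$, not on whether the underlying model is serial), giving
\[
    \Kmodel^\Theta_B,\, (v_\Theta(i))_B \models \varphi' .
\]
Hence $\varphi$ is falsified in a USPO model, completing the contrapositive.

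The main obstacle is not really an obstacle but a sanity check: one has to verify that the machinery developed for $\tableauIF$ still functions after the addition of $[\ser]$. The delicate point is that $[\ser]$ introduces accessibility formulas freely for every nominal, so one must make sure that (a) the model-existence induction is undisturbed (which it is, since $[\ser]$ only enriches the set of accessibility formulas and the $\Box$ and $\Dia$ cases consult only such formulas already present), and (b) the bulldozing construction still preserves the new condition; the seriality lemma above takes care of (b) by treating irreflexive worlds and cluster worlds separately and using the successor $(\alpha(w), n+1)$ within an infinite chain in the latter case.
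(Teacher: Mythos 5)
Your proposal is correct and follows essentially the same route as the paper: the paper likewise reduces the USPO case to the SPO argument (contrapositive, open saturated branch, model-existence lemma, bulldozing, truth preservation) supplemented by exactly the two seriality lemmas you invoke — that $\Kmodel^\Theta$ is serial and that bulldozing preserves seriality. The only quibble is terminological: the formulas $@_i \Dia j$ added by $[\ser]$ are not \emph{accessibility formulas} in the paper's sense (those are by definition produced by $[\Dia]$), which is precisely why $[\Dia]$ may subsequently be applied to them; this does not affect the validity of your argument.
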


\section{For Partial Order}
\label{secPO}

To construct a tableau calculus corresponding to the class of PO models, we need the rules: $[\reflex]$ for reflexivity, $[\antisym]$ for anti-symmetry, and $[\trans]$ for transitivity.

Here, we briefly check the intuitive meaning of the formula $@_i \Box (i \lor \Box \neg i)$ as it may be unfamiliar to the reader. Take one world $w$ and name it $i$. Next, take another world $v$ that is reachable from but apart from $w$. Then, every world reachable from $v$ does not verify $i$; that is, it cannot be the same as $w$. This is exactly anti-symmetry; if we move to another world, we cannot go back.
\begin{definition}
    The tableau calculus $\tableauPO$ is constructed by replacing $[\neg]$ with $[\reflexivity]$, adding the rules $[\reflex]$, $[\antisym]$, and $[\trans]$, and adding the restriction $\Drest$ to $\tableau$.
\end{definition}

\subsection{Termination Property}
\label{subsectermPO}

We obtain the termination property for $\tableauPO$ by modifying the discussion in Section \ref{subsecterm}. First, observe that a subformula property in a slightly different format holds.

\begin{lemma}
  Let $\Theta$ be a branch of a tableau of $\tableauPO$. For every formula of the form $@_i \varphi$ in $\Theta$, one of the following conditions holds:
  \begin{itemize}
      \item It is a prefixed subformula of the root formula of $\Theta$.
      \item It is an accessibility formula.
      \item It is a prefixed subformula of $@_j \Box (j \lor \Box \neg j)$ for some $j$.
  \end{itemize}
  \label{lemquasiPO}
\end{lemma}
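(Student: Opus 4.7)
The plan is to proceed by induction on the number of rule applications used to build $\Theta$, following exactly the template of the proof of Lemma \ref{lemquasi}. The base case involves only the root formula, which trivially witnesses the first clause as a prefixed subformula of itself. For the inductive step I would dispatch each rule of $\tableauPO$ in turn, with most of them routine.

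The rules $[\land]$, $[\lor]$, $[\Box]$, $[@]$, and $[\Id]$ each add a formula whose right-hand side is a subformula of the right-hand side of a premise; since such premises have main connective other than $\Dia$ and so are not accessibility formulas, the induction hypothesis places them under clause 1 or clause 3, and transitivity of the prefixed-subformula relation transfers the clause to the new formula. The rule $[\Dia]$ contributes an accessibility formula (clause 2) together with $@_j \varphi$, which is a prefixed subformula of the premise $@_i \Dia \varphi$ and so inherits its clause. The rule $[\reflexivity]$ adds $@_i i$ for a nominal $i$ already on $\Theta$; tracing back, $i$ arose either in the root formula (whence $@_i i$ is a prefixed subformula of the root, clause 1) or via a new-nominal rule such as $[\Dia]$ or $[\reflex]$, in which case the corresponding $@_j \Dia i$ lies on $\Theta$ and $@_i i$ appears as a prefixed subformula of this accessibility formula.

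The genuinely new cases are the three rules added for $\tableauPO$. The rule $[\reflex]$ adds $@_i \Dia i$, which has the shape of an accessibility formula and thus falls under clause 2. The rule $[\antisym]$ adds exactly $@_i \Box(i \lor \Box \neg i)$, which is a prefixed subformula of itself and so witnesses clause 3 with $j = i$. For $[\trans]$ the argument mirrors the $\tableauIF$ case: the new formula $@_j \Box \varphi$ is a prefixed subformula of the premise $@_i \Box \varphi$; since $@_i \Box \varphi$ has $\Box$ at its root it cannot be an accessibility formula, so by the induction hypothesis it sits under clause 1 or clause 3, and $@_j \Box \varphi$ inherits this.

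The only subtlety, and the single place where the $\tableauIF$ proof is not simply copied, is in checking that the inheritance for $[\trans]$ still works when the template in clause 3 is $@_k \Box(k \lor \Box \neg k)$ instead of $@_k \Box \neg k$. The subformulas of $\Box(k \lor \Box \neg k)$ whose main connective is $\Box$ are $\Box(k \lor \Box \neg k)$ itself and the nested $\Box \neg k$; in both subcases $\Box \varphi$ remains a subformula of $\Box(k \lor \Box \neg k)$, so the new $@_j \Box \varphi$ is again a prefixed subformula of the same template, and clause 3 is preserved across the $[\trans]$ step. Everything else is a routine re-run of the argument for Lemma \ref{lemquasi}.
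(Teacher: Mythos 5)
Your overall strategy --- induction on the number of rule applications, following the template of Lemma \ref{lemquasi} --- is exactly what the paper intends (it states Lemma \ref{lemquasiPO} without proof), and your handling of $[\trans]$ and $[\antisym]$, including the check that a boxed subformula of $\Box(k \lor \Box\neg k)$ is again a subformula of that same template, is the one genuinely new point and you get it right. Two of the remaining cases, however, do not land in any of the lemma's three alternatives as you have written them.

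For $[\reflexivity]$, when $i$ is a nominal introduced by $[\Dia]$ you conclude that $@_i i$ is ``a prefixed subformula of the accessibility formula $@_j \Dia i$.'' That is not one of the three clauses: clause 2 requires the formula to \emph{be} an accessibility formula, not to be a prefixed subformula of one. (Your case split also misses the root nominal $i_0$, which occurs in the root formula only as a prefix and is not generated by any rule.) The repair is uniform and removes the case split entirely: for every nominal $i$ on the branch, $i$ is a subformula of $\Box(i \lor \Box\neg i)$, so $@_i i$ is a prefixed subformula of $@_i \Box(i \lor \Box\neg i)$ and falls under clause 3 with $j = i$. For $[\reflex]$, you classify $@_i \Dia i$ under clause 2 because it ``has the shape of an accessibility formula,'' but the paper's definition (footnote $*3$ of Definition \ref{defrules}) is generation-based: an accessibility formula is one produced by $[\Dia]$ with a fresh nominal, and $@_i \Dia i$ is neither. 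The paper itself refuses this shortcut for the analogous $[\ser]$-formulas --- Lemma \ref{lemquasiI4D} adds a fourth clause for them instead of calling them accessibility formulas --- so consistency requires either a fourth clause of the form ``it is a prefixed subformula of $@_j \Dia j$ for some $j$'' in Lemma \ref{lemquasiPO} or an explicit widening of clause 2. This is arguably an omission in the lemma as stated rather than a defect of your argument, but your proof should make the discrepancy explicit rather than silently reinterpret the definition; note that either fix is harmless downstream, since formulas of the form $@_i \Dia i$ contribute nothing to $T^\Theta(i)$ as defined in Definition \ref{defTPO}.
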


Next, we redefine the set $T^\Theta(i)$ so that it works for the tableau calculus dealing with both anti-symmetry and transitivity. Note that the sets in the definition below are also finite.
\begin{definition}
  Let $\Theta$ be a branch of a tableau. For every nominal $i$ occurring in $\Theta$, the set $T^\Theta(i)$ is a union of $T_1^\Theta(i)$ and $T_2^\Theta(i)$, where:
  \begin{itemize}
      \item $T_1^\Theta(i)$ is the set of formulas $\varphi$ such that $@_i \varphi \in \Theta$ is a prefixed subformula of the root formula of $\Theta$.
      \item $T_2^\Theta(i)$ is the set of formulas $\psi$ such that $@_i \psi \in \Theta$ is a prefixed subformula of $@_j \Box (j \lor \Box \neg j)$ for some $j$ occurring in the root formula of $\Theta$.
  \end{itemize}
  \label{defTPO}
\end{definition}

Also, we can check that the graph $G^\Theta = (N^\Theta, \prec_\Theta)$ remains a disjoint union of well-defined and finitely branching trees. The only new rule that may break this property is $[\reflex]$; however, $[\reflex]$ only allows us to add an accessibility formula, and it makes no change to the relation $\prec_\Theta$.

Then, we are ready to show the termination property.

\begin{theorem}
    The tableau calculus $\tableauPO$ has the termination property.
    \label{thmtermPO}
\end{theorem}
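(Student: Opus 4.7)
The plan is to mirror the termination proof for $\tableauIF$ (Theorem \ref{thmtermI4}), adapting the definitions to the PO setting. First I would secure the analogue of Lemma \ref{leminf} for $\tableauPO$: any infinite branch $\Theta$ admits an infinite $\prec_\Theta$-chain $i_0 \prec_\Theta i_1 \prec_\Theta \cdots$. This reduction rests on $G^\Theta$ being a finite disjoint union of well-founded, finitely-branching trees, to which K\"onig's lemma then applies. Well-foundedness is immediate from the freshness condition on $[\Dia]$; finite branching holds because the only rules introducing accessibility formulas are $[\Dia]$ (bounded by the finitely many $\Dia$-formulas in $T^\Theta(i)$) and $[\reflex]$ (applicable at most once per nominal), as the author has already observed.

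Suppose for contradiction that $\Theta$ is infinite, with root formula $@_i \varphi$, and fix such an infinite chain. Define the finite set
\begin{align*}
Q = &\{\psi \mid \psi \text{ is a subformula of } \varphi\} \\
\cup &\{\psi \mid \psi \text{ is a subformula of } \Box(j \lor \Box \neg j) \text{ for some } j \text{ occurring in } \varphi\},
\end{align*}
and set $n = |Q|$. By Lemma \ref{lemquasiPO}, every non-accessibility formula $@_{i_k}\psi \in \Theta$ satisfies $\psi \in Q$, so $T^{\Theta'}(i_k) \subseteq Q$ for every initial fragment $\Theta'$ of $\Theta$ and every $k$.

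Let $\Theta'$ be the fragment of $\Theta$ up to, but not including, the first occurrence of $i_{2^n + 1}$. Since $i_{2^n + 1}$ arises from an application of $[\Dia]$ to some $@_{i_{2^n}} \Dia \varphi'$, the restriction $\Drest$ forces $i_{2^n}$ to be a quasi-urfather on $\Theta'$. But the $2^n + 1$ sets $T^{\Theta'}(i_0), \ldots, T^{\Theta'}(i_{2^n})$ are all subsets of $Q$, of which there are only $2^n$. By the pigeonhole principle, $T^{\Theta'}(i_l) = T^{\Theta'}(i_m)$ for some $0 \le l < m \le 2^n$, yielding distinct twins $i_l, i_m$ with $i_l, i_m \prec_\Theta^* i_{2^n}$, which contradicts the quasi-urfather property of $i_{2^n}$.

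The only genuinely PO-specific obstacle is verifying that the new rules preserve the structural finiteness used above, and it is minor: $[\antisym]$ and $[\trans]$ introduce no fresh nominals and contribute only $\Box$-headed formulas; $[\reflex]$ contributes at most one accessibility formula per nominal; and the redefined $T^\Theta(i)$ remains finite since only finitely many nominals occur in the root formula and each $\Box(j \lor \Box \neg j)$ has finitely many subformulas. Hence the same pigeonhole-versus-quasi-urfather tension that drove Theorem \ref{thmtermI4} drives this theorem as well.
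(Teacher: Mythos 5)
Your proposal is correct and follows essentially the same route as the paper, which likewise just redefines $Q$ to contain the subformulas of $\varphi$ together with the subformulas of $\Box (j \lor \Box \neg j)$ for $j$ occurring in $\varphi$ and then reruns the pigeonhole-versus-quasi-urfather argument of Theorem \ref{thmtermI4}, with the same side remarks about $[\reflex]$ keeping $G^\Theta$ finitely branching. One small correction: the inclusion $T^{\Theta'}(i_k) \subseteq Q$ comes from Definition \ref{defTPO} (whose $T_2^\Theta$ clause is restricted to nominals occurring in the root formula), not from Lemma \ref{lemquasiPO}, whose third clause also covers $@_j \Box (j \lor \Box \neg j)$ for fresh $j$; so your intermediate claim that \emph{every} non-accessibility formula $@_{i_k}\psi$ has $\psi \in Q$ is too strong, even though the conclusion you actually use is fine.
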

\begin{proof}
    For $\Theta$ and its root formula $@_i \varphi$, we define $Q$ and $n$ as follows:
    \begin{align*}
        Q = &\{ \psi \mid \psi \text{ is a subformula of } \varphi \} \\
        \cup &\{ \psi \mid \psi \text{ is a subformula of } \Box (j \lor \Box \neg j) \text{ where } j \text{ occurs in } @_i \varphi \}
    \end{align*}
    and $n$ is the number of elements in $Q$. The rest of the proof can be done like that of Theorem \ref{thmtermI4}.
\end{proof}

\subsection{Completeness}

When showing the completeness of $\tableauPO$, the process is the same; take identity urfathers, create a model, and bulldoze it. Using Definition \ref{defTPO}, we can keep adopting Definition \ref{defiur} as the definition of identity urfathers, and relative lemmas still hold. From constructing a model, we need to make some changes.

\begin{definition}
    Given an open saturated branch $\Theta$ with a root formula $@_{i_0} \varphi_0$ of a tableau in $\tableauPO$, a model $\Kmodel^\Theta = (W^\Theta, R^\Theta, V^\Theta)$ is defined as follows:
    \begin{align*}
        W^\Theta &= \{ i \mid i \text{ is an identity urfather on } \Theta \} \\
        R_e^\Theta &= \{ (v_\Theta(i), v_\Theta(j)) \mid @_i \Dia j \in \Theta \text{ and } i, j \in \dom(v_\Theta) \} \\
        R^\Theta &= (R_e^\Theta)^* \\
        V^\Theta(p) &= \{ v_\Theta(i) \mid @_i p \in \Theta \},\text{ where } p \in \Prop \\
        V^\Theta(i) &=
        \begin{cases}
            \{ v_\Theta(i) \} & \text{if } i \in \dom(v_\Theta) \\
            \{ i_0 \} & \text{otherwise,}
        \end{cases}
        \text{ where } i \in \Nom.
    \end{align*}
    Here, $R^*$ is the reflexive and transitive closure of $R$.
    \label{defPOtableaumodel}
\end{definition}
\begin{lemma}
    Let $\Theta$ be an open saturated branch of a tableau in $\tableauPO$ and $@_i \varphi$ be a prefixed subformula of the root formula $@_{i_0} \varphi_0$ of $\Theta$, where $i$ denotes an identity urfather. Then we have the following proposition:
    \[
        \text{if } @_i \varphi \in \Theta, \text{ then } \Kmodel^\Theta, v_\Theta(i) \models \varphi.
    \]
    In particular, $\Kmodel^\Theta, v_\Theta(i_0) \models \varphi_0$.
    \label{lemmodelexistS4As}
\end{lemma}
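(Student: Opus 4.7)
The plan is to proceed by structural induction on $\varphi$, closely mirroring the proof of Lemma \ref{lemmodelexistI4}. The base cases (propositional variables, nominals, and their negations), the Boolean cases, and the $@$-case are essentially identical to the SPO proof, so the focus lies on the modal cases, where the switch from $R^\Theta = (R_e^\Theta)^+$ to $R^\Theta = (R_e^\Theta)^*$ forces a new reflexive sub-case that must be discharged via $[\reflex]$.

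For $\varphi = \Dia \psi$, from $@_i \Dia \psi \in \Theta$ together with the fact that $i$ is an identity urfather (hence a quasi-urfather) and $\Theta$ is saturated, an application of $[\Dia]$ supplies a nominal $k$ with $@_i \Dia k, @_k \psi \in \Theta$. Since $i \prec_\Theta k$, Lemma \ref{lempropiur}(\ref{lemiursucc}) yields $k \in \dom(v_\Theta)$. Because $@_k \psi$ is a prefixed subformula of the root formula (as $\psi$ is a subformula of the root), Lemma \ref{lempropiur}(\ref{lemiurcl}) gives $@_{v_\Theta(k)} \psi \in \Theta$; the induction hypothesis then produces $\Kmodel^\Theta, v_\Theta(k) \models \psi$, and $v_\Theta(i) R_e^\Theta v_\Theta(k) \subseteq R^\Theta$ closes the case.

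For $\varphi = \Box \psi$, fix $j \in W^\Theta$ with $v_\Theta(i) R^\Theta j$. Since $R^\Theta$ is the reflexive transitive closure of $R_e^\Theta$, either $j = v_\Theta(i)$, or there is a chain $v_\Theta(i) R_e^\Theta i_1 R_e^\Theta \cdots R_e^\Theta i_n R_e^\Theta j$ with $n \geq 0$. In the chain sub-case I would unfold each $R_e^\Theta$ step into an accessibility formula with the usual twin bookkeeping, propagate $\Box \psi$ along the chain using $[\trans]$, and conclude $@_j \psi \in \Theta$ with a final $[\Box]$ step, exactly as in Lemma \ref{lemmodelexistI4}. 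The new reflexive sub-case $j = v_\Theta(i) = i$ is where $[\reflex]$ enters: saturation yields $@_i \Dia i \in \Theta$, then $[\Box]$ applied to $@_i \Box \psi$ and $@_i \Dia i$ produces $@_i \psi \in \Theta$, and the induction hypothesis finishes it.

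The main pitfall is not technical but bookkeeping: at each application of Lemma \ref{lempropiur}(\ref{lemiurcl}) one must check that the formula being transferred to an identity urfather is genuinely a prefixed subformula of the root formula, and not merely an accessibility formula or a prefixed subformula of $@_j \Box(j \lor \Box \neg j)$ (cf.\ Lemma \ref{lemquasiPO}). The enlarged definition of $T^\Theta(i)$ in Definition \ref{defTPO} has been set up precisely so that twins preserve the $\Box$-formulas fed into the $[\trans]$ propagation above, so no additional friction arises from combining transitivity with anti-symmetry at this stage; anti-symmetry of the final countermodel is deferred to the bulldozing step and is not needed for this lemma.
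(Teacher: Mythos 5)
Your proposal is correct and takes essentially the same approach as the paper: induction on the complexity of $\varphi$, carrying over the argument of Lemma \ref{lemmodelexistI4} for every case except the new reflexive sub-case of $\varphi = \Box \psi$, which you discharge exactly as the paper does (saturation under $[\reflex]$ gives $@_i \Dia i \in \Theta$, then $[\Box]$ yields $@_i \psi \in \Theta$, and the induction hypothesis finishes). The paper's own write-up only spells out that reflexive sub-case and leaves the chain sub-case implicit, so your more explicit bookkeeping is a faithful elaboration rather than a different route.
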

\begin{proof}
    By induction on the complexity of $\varphi$. 
    
    The only case we have to watch out for is the case $\varphi = \Box \psi$. In this case, we automatically have $v_\Theta(i) R^\Theta v_\Theta(i)$ by the reflexivity of $R^\Theta$, so we have to show that $\Kmodel^\Theta, v^\Theta(i) \models \psi$.
    By the saturation of the branch $\Theta$, $@_i \Dia i \in \Theta$ holds. Thus, by the assumption, we obtain $@_i \psi$ in $\Theta$ applying $[\Box]$. By induction hypothesis, we have $\Kmodel^\Theta, v^\Theta(i) \models \psi$.
\end{proof}

From the definition, $\Kmodel^\Theta$ is always reflexive and transitive. However, the anti-symmetry of $\Kmodel^\Theta$ is not guaranteed. Then, we bulldoze it into a PO model. 

To create not an SPO but a PO model, we redefine bulldozing by changing two points. One is that we only bulldoze proper clusters to keep a model reflexive. The other is that we insert a total order in each proper cluster instead of a strict total order.

\begin{definition}
    Given a model $\Kmodel = (W, R, V)$, the bulldozed model $\Kmodel_{B'} = (W_{B'}, R_{B'}, V_{B'})$ of $\Kmodel$ is defined as follows:
    \begin{enumerate}[1.]
        \item Index the \emph{proper} clusters in $\Kmodel$ by $\mathbb{N}$.
        \item Insert a total order $\leq_n$ in each $C_n$.
        \item Define $W_{B'}, \alpha', R_{B'}, V_{B'}$ as we do in Definition \ref{defbulldoze}.
    \end{enumerate}
    \label{defbulldozePO}
\end{definition}

\begin{lemma}
    If a model $\Kmodel = (W, R, V)$ is reflexive and transitive, then the bulldozed one $\Kmodel_{B'} = (W_{B'}, R_{B'}, V_{B'})$ is reflexive, transitive, and anti-symmetric.
    \label{lembulldozingPO}
\end{lemma}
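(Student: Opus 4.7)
The plan is to mirror the proof of Lemma \ref{lembulldozing} with two adjustments: proper clusters are now replaced by copies of a total order $\leq_n$ rather than a strict total order $<_n$, and simple clusters are left untouched inside $W^-$. I will verify the three required properties in turn, by case analysis on whether the worlds involved lie in $W^-$ or in some $C'_l$.

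For reflexivity, take any $w \in W_{B'}$. If $w \in W^-$, then $w$ sits in a simple cluster of the original model, so $w R w$ and hence the first clause of the definition of $R_{B'}$ (transferred from Definition \ref{defbulldoze}) gives $w R_{B'} w$. If $w = (w', m)$ for some $w' \in C_l$, then $m = m$ and $w' \leq_l w'$, so $w R_{B'} w$ by the third clause, now read with $\leq_l$ in place of $<_l$. For transitivity, I reuse the case analysis of Lemma \ref{lembulldozing}: when not all three of $w, v, u$ lie in the same $C'_l$, transitivity of the original $R$ gives the conclusion; when they do, writing $w = (w', a)$, $v = (v', b)$, $u = (u', c)$, the hypotheses force $a \leq b \leq c$ on $\mathbb{N}$, so either $a < c$ (hence $w R_{B'} u$ directly) or $a = b = c$, in which case $w' \leq_l v' \leq_l u'$ and transitivity of $\leq_l$ closes the case.

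The real work is anti-symmetry. Suppose $w R_{B'} v$ and $v R_{B'} w$. If $w, v \in W^-$, then $\alpha'(w) R \alpha'(v)$ and $\alpha'(v) R \alpha'(w)$, so $\{w, v\}$ is contained in a single $R$-cluster of $\Kmodel$; because both belong to $W^-$, this cluster must be simple, forcing $w = v$. If one of $w, v$ lies in $W^-$ and the other in some $C'_n$, the same mutual-reachability argument would identify a simple-cluster world with a proper-cluster world, which is impossible. If $w \in C'_m$ and $v \in C'_n$ with $m \neq n$, the second clause requires $\alpha'(w) R \alpha'(v)$ and $\alpha'(v) R \alpha'(w)$, again putting them in the same cluster of $\Kmodel$, contradicting $m \neq n$. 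The remaining case is $w, v \in C'_l$, so $w = (w', a)$ and $v = (v', b)$; here the third clause applied in both directions forces $a \leq b$ and $b \leq a$ on $\mathbb{N}$, hence $a = b$, and then $w' \leq_l v'$ and $v' \leq_l w'$ give $w' = v'$ by anti-symmetry of the total order $\leq_l$, so $w = v$.

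The main obstacle I expect is the anti-symmetry argument, and specifically the bookkeeping around which worlds of $\Kmodel_{B'}$ live in $W^-$ versus in the copies $C'_n$: everything hinges on the observation that if two points of $W_{B'}$ are mutually reachable then their $\alpha'$-images are mutually $R$-reachable in $\Kmodel$, together with the fact that the only within-copy mutual reachabilities come from the relation $\leq_l$, which is anti-symmetric by choice. Once that case split is laid out, the rest is routine verification.
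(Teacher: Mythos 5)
Your proof is correct and follows essentially the same route as the paper's: reflexivity and transitivity are routine adaptations of Lemma \ref{lembulldozing}, and anti-symmetry is established by splitting on whether the two mutually related worlds lie in the same copy-set $C'_n$, using the cluster structure of $\Kmodel$ in the cross-case and the anti-symmetry of the inserted total order $\leq_n$ (together with $a \leq b \leq a$ on the indices) in the within-copy case. Your write-up is somewhat more explicit about the $W^-$ versus $C'_n$ bookkeeping than the paper's, but the underlying argument is identical.
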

\begin{proof}
    Reflexivity is straightforward, and transitivity can be shown as in Lemma \ref{lembulldozing}. Therefore, we show that $\Kmodel_{B'} = (W_{B'}, R_{B'}, V_{B'})$ is anti-symmetric.

    Take two worlds $w, v$ such that $w R_{B'} v$. If they do not belong to the same set $C'_n$, $v R_{B'} w$ never holds; otherwise, $\alpha(w) R \alpha(v)$ together with $\alpha(v) R \alpha(w)$ means that they are in the same cluster. Then, suppose they are in the same set $C'_n$. Then, $w$ and $v$ can be written as $w = (w', l), v = (v', m)$. From $w R_{B'} v$, we have either $l < m$ or, $l = m$ and $w' \leq_n v'$. If $v R_{B'} w$ also holds, then it follows that $l = m$ and $v' \leq_n w'$. Since $\leq_n$ is a total order, we have $w' = v'$, which implies $w = v$.
\end{proof}

The key fact in the process of proving the completeness for $\tableauIF$ is that every world in a cluster in $\Kmodel^\Theta$ is not named, and $[\irr]$ plays an important role in proving it. In the following discussion, a similar situation happens; all the worlds in proper clusters are not named owing to $[\antisym]$, and the proof proceeds based on the following lemma.

\begin{lemma}
    Let $\Theta$ be an open saturated branch of a tableau of $\tableauPO$, $\Kmodel^\Theta$ be a model constructed by $\Theta$, and $C_n$ be a proper cluster of $W^\Theta$. If $i \in C_n$, then every nominal $j$ such that $\Kmodel^\Theta, i \models j$ does not occur in the root formula of $\Theta$.
    \label{lemclusternotnamedPO}
\end{lemma}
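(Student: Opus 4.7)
The plan is to mirror the proof of Lemma~\ref{lemclusternotnamed}, with $[\antisym]$ playing the role of $[\irr]$, but restricted to the proper-cluster case (simple clusters are now permitted because $R^\Theta$ is reflexive).

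I would start by assuming for contradiction that some $i \in C_n^\Theta$ is named; by Definition~\ref{defTPO}, this gives a nominal $l$ occurring in the root formula with $@_i l \in \Theta$. First, using $[\reflexivity]$ to get $@_i i$ and then $[\Id]$ with $@_i l$, I obtain $@_l i \in \Theta$; applying $[\antisym]$ to $l$ gives $@_l \Box(l \lor \Box \neg l) \in \Theta$, and a second application of $[\Id]$ via $@_l i$ transfers this to $@_i \Box(l \lor \Box \neg l) \in \Theta$. The key observation---used throughout the rest of the argument---is that because $l$ occurs in the root, the formula $\Box(l \lor \Box \neg l)$ together with all of its subformulas lies in $T_2^\Theta(\cdot)$ whenever the corresponding prefixed formula appears in $\Theta$; this lets twin-matching freely transport these formulas between nominals sharing the same representative.

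Since $C_n^\Theta$ is proper, I would pick $j \in C_n^\Theta$ with $j \neq i$, obtaining a nontrivial $R_e^\Theta$-cycle $i = i_0 \prec_\Theta i_1 \prec_\Theta \cdots \prec_\Theta j \prec_\Theta \cdots \prec_\Theta i$; after shortening any initial self-loops, I may assume $i_1 \neq i$. Unpacking the first edge into a witness $@_{a_0} \Dia a_1 \in \Theta$ with $v_\Theta(a_0) = i$ and $v_\Theta(a_1) = i_1$, I would transfer $@_{a_0} \Box(l \lor \Box \neg l)$ by twin-matching and apply $[\Box]$ to obtain $@_{a_1}(l \lor \Box \neg l)$. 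Saturation on $[\lor]$ leaves two sub-cases: in the first, $@_{a_1} l \in \Theta$ would transfer (again by twins) to $@_{i_1} l$, and Lemma~\ref{lempropiur}(\ref{lemiureq}) applied to both $@_i l$ and $@_{i_1} l$ would force $i_1 = i$, contradicting my choice of path; hence the branch must contain $@_{a_1} \Box \neg l$, which lifts to $@_{i_1} \Box \neg l \in \Theta$.

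Finally, I would propagate $\Box \neg l$ along the remainder of the cycle---at each edge lifting $\Box \neg l$ from the cluster representative to a syntactic witness via twins, applying $[\trans]$ across the accessibility formula of the edge, and transferring back---and finish with a single application of $[\Box]$ at the final edge into $i$, yielding $@_i \neg l \in \Theta$. Together with $@_i l \in \Theta$, this closes $\Theta$, contradicting openness. The main obstacle I anticipate is precisely this bookkeeping: each $R_e^\Theta$-edge carries its accessibility formula between \emph{some} twins of its endpoints rather than between the endpoints themselves, so every propagation step requires a detour through twins, and the whole mechanism depends on $\Box \neg l$ and its subformulas lying in $T^\Theta$---which is why it was essential to extract the root-occurring nominal $l$ rather than to work directly with $i$, whose own $\Box(i \lor \Box \neg i)$ formula is not in general tracked by $T^\Theta$.
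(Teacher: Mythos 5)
Your proposal is correct and follows essentially the same route as the paper's proof: extract a root nominal $l$ with $@_i l \in \Theta$, derive $@_i \Box(l \lor \Box\neg l)$ via $[\antisym]$ and $[\Id]$, walk the $R_e^\Theta$-cycle witnessing properness, and split on the disjunction at the first successor, with one case contradicting the distinctness of identity urfathers via Lemma~\ref{lempropiur}(\ref{lemiureq}) and the other propagating $\Box\neg l$ around the cycle by twin-transfer and $[\trans]$ to close the branch. Your explicit insistence that $i_1 \neq i$ (obtained by shortening initial self-loops) is a small but worthwhile refinement the paper leaves implicit, since the first case only yields a contradiction when $i$ and $i_1$ are distinct identity urfathers.
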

\begin{proof}
    By \textit{reductio ad absurdum}.
    
     Suppose that we have a nominal $j$ occurring in the root formula of $\Theta$ such that $\Kmodel^\Theta, i \models j$. Since $i$ is in a proper cluster, we have some worlds $i_1, \ldots, i_m$ different from $i$ such that $i R_e^\Theta i_1 R_e^\Theta \cdots R_e^\Theta i_m R_e^\Theta i$. Moreover, we have $@_j \Box (j \lor \Box \neg j) \in \Theta$. Then, we obtain $@_{i_1} (j \lor \Box \neg j) \in \Theta$ from $i R_e^\Theta i_1$, like in the proof of Lemma \ref{lemclusternotnamed}. Now, we have two cases that either $@_{i_1} j$ or $@_{i_1} \Box \neg j$ is in $\Theta$.
     \begin{enumerate}[(a)]
         \item If $@_{i_1} j \in \Theta$, then $v_\Theta(i_1) = v_\Theta(j)$ by Lemma \ref{lempropiur} (\ref{lemiureq}). Moreover, we have $v_\Theta(j) = i$ by assumption. Therefore, $v_\Theta(i_1) = i$ holds, but it contradicts that both $i$ and $i_1$ are identity urfathers and elements in $W^\Theta$.
         \item If $@_{i_1} \Box \neg j \in \Theta$, then we obtain $@_{i_2} \Box \neg j, \ldots, @_{i_m} \Box \neg j \in \Theta$, and $@_i \neg j \in \Theta$. Therefore, $\Theta$ contains both $@_i j$ and $@_i \neg j$ and is closed, which is a contradiction. \qedhere
     \end{enumerate}
\end{proof}

Now, we can fill in the rest of the proof. 

\begin{lemma}
    Let $\Theta$ be an open saturated branch. For any nominal $i \in W^\Theta$ in some proper cluster $C_l$, formula $\varphi$ such that $@_i \varphi \in \Theta$ is a prefixed subformula of the root formula, and $m, n \in \mathbb{N}$, we have
    \[
        \Kmodel^\Theta_{B'}, (i, m) \models \varphi \iff \Kmodel^\Theta_{B'}, (i, n) \models \varphi.
    \]
    \label{lemclusterequivPO}
\end{lemma}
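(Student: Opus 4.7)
The plan is to mirror the proof of Lemma \ref{lemclusterequiv}, proceeding by induction on the complexity of $\varphi$, with modifications reflecting the fact that $R^\Theta_{B'}$ is reflexive (inherited from a total rather than strict total order) and that bulldozing is applied only to proper clusters. The key structural input, analogous to the role played by Lemma \ref{lemclusternotnamed} in the SPO case, is Lemma \ref{lemclusternotnamedPO}: no world in a proper cluster is named by the root formula.

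First, I would dispatch the atomic cases. For propositional variables (positive or negative), the conclusion follows at once from $\alpha((i, m)) = \alpha((i, n)) = i$ together with the definition of $V_{B'}$. For a positive nominal $\varphi = j$, the hypothesis $@_i j \in \Theta$ combined with $j$ being a subformula of the root formula would place $j \in T^\Theta(i)$, naming $i$; so this subcase is ruled out by Lemma \ref{lemclusternotnamedPO}. For $\varphi = \neg j$ with $j$ occurring in the root formula, Lemma \ref{lempropiur} (\ref{lemrightiur}) gives $j \in \dom(v_\Theta)$; were $v_\Theta(j) = i$, then using $@_j j$ from $[\reflexivity]$ together with $i$ and $j$ being twins, the nominal $j$ would again lie in $T^\Theta(i)$, contradicting Lemma \ref{lemclusternotnamedPO}. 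Hence $j^{V_{B'}}$ cannot be of the form $(i, k)$, so both $(i, m)$ and $(i, n)$ satisfy $\neg j$. The Boolean and $@$ cases then follow directly from the induction hypothesis.

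For $\varphi = \Dia \psi$, given a witness $w$ for $(i, m)$, I would split on whether $w$ lies in $C'_l$: if not, the definition of $R^\Theta_{B'}$ between distinct blocks gives $(i, n) R^\Theta_{B'} w$ at once; if so, writing $w = (w', k)$, the induction hypothesis applied at $w' \in C_l$ allows me to shift the second coordinate freely, in particular to some $k'$ with $(i, n) R^\Theta_{B'} (w', k')$. The reflexive self-loop $(i, m) R^\Theta_{B'} (i, m)$ coming from $\leq_l$ is absorbed here by taking the witness to be $(i, n)$ itself and invoking the induction hypothesis. For $\varphi = \Box \psi$, given $(i, n) R^\Theta_{B'} w$, the off-cluster case is handled directly since $(i, m) R^\Theta_{B'} w$ as well, and the in-cluster case $w = (w', k)$ is dealt with by observing that $(i, m) R^\Theta_{B'} (w', \max(m, n) + 1)$ and then sliding the second coordinate back to $k$ via the induction hypothesis on $w'$. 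The main subtlety relative to the SPO proof is just the presence of reflexive arrows inside $C'_l$, but this is absorbed without real difficulty, so the genuine work has already been done in establishing Lemma \ref{lemclusternotnamedPO}.
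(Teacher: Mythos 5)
Your proposal is correct and follows exactly the route the paper intends: the paper states Lemma \ref{lemclusterequivPO} without proof, implicitly deferring to the argument for Lemma \ref{lemclusterequiv}, and your induction --- ruling out the nominal cases via Lemma \ref{lemclusternotnamedPO} and shifting second coordinates in the $\Dia$ and $\Box$ cases --- is precisely that adaptation. The only additions beyond the paper's template are your explicit treatment of $\neg j$ and of the reflexive loops arising from $\leq_l$, both of which are handled correctly.
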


\begin{lemma}
    Let $\Theta$ be an open saturated branch, $i$ be an identity urfather of $\Theta$, and $\varphi$ be a formula such that $@_i \varphi$ is a prefixed subformula of the root formula. Then we have
    \[
        \Kmodel^\Theta, i \models \varphi \iff \Kmodel^\Theta_{B'}, i_{B'} \models \varphi ,
    \]
    where
    \[
        i_{B'} = 
        \begin{cases}
            (i, 0) &\text{if } i \text{ is in some } C_n \\
            i &\text{otherwise}.
        \end{cases}
    \]
    \label{lembulldozingpreservePO}
\end{lemma}

\begin{theorem}
    The tableau calculus $\tableauPO$ is complete for the class of all PO models.
    \label{thmcompleteSPAs}
\end{theorem}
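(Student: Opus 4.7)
The plan is to prove the contrapositive by mimicking the chain of reductions used for Theorem \ref{thmcompleteI4}. Suppose $\varphi$ is not provable in $\tableauPO$. Starting from a fresh nominal $i$ and an NNF $\varphi'$ of $\neg \varphi$, build a tableau rooted at $@_i \varphi'$ and apply rules until no rule can add a new formula. By Theorem \ref{thmtermPO} this process terminates, so we get a finite tableau, and because $\varphi$ is not provable, not every branch is closed. Pick any open saturated branch $\Theta$.

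Next, I would invoke Lemma \ref{lemmodelexistS4As} on $\Theta$ to obtain $\Kmodel^\Theta, v_\Theta(i) \models \varphi'$, i.e., $v_\Theta(i)$ falsifies $\varphi$ in $\Kmodel^\Theta$. By construction of $R^\Theta = (R_e^\Theta)^*$ in Definition \ref{defPOtableaumodel}, the accessibility relation of $\Kmodel^\Theta$ is reflexive and transitive, but it need not be anti-symmetric: proper clusters may have been created because twins outside the root formula can get collapsed by the identity urfather construction, which is precisely the phenomenon we used bulldozing to eliminate in the SPO case.

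The remedy is to apply the PO-style bulldozing of Definition \ref{defbulldozePO} to $\Kmodel^\Theta$, yielding $\Kmodel^\Theta_{B'}$. Lemma \ref{lembulldozingPO} ensures that $\Kmodel^\Theta_{B'}$ is reflexive, transitive, and anti-symmetric, so it is a PO model. Lemma \ref{lembulldozingpreservePO} then transports satisfaction of prefixed subformulas of the root along the map $i \mapsto i_{B'}$, so $\Kmodel^\Theta_{B'}, (v_\Theta(i))_{B'} \models \varphi'$. This exhibits a PO model falsifying $\varphi$, which is the desired conclusion.

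The only non-bookkeeping step is confirming that all the infrastructure carries over from $\tableauIF$ to $\tableauPO$, and in particular that Lemmas \ref{lemclusterequivPO} and \ref{lembulldozingpreservePO} hold as stated — the crucial input being Lemma \ref{lemclusternotnamedPO}, which replaces the role of $[\irr]$ (which rules out \emph{all} clusters in the SPO case) by $[\antisym]$ (which rules out \emph{proper} clusters only, consistent with keeping simple reflexive points intact for the PO case). Given those lemmas, the theorem is a routine concatenation: termination $\Rightarrow$ open saturated branch $\Rightarrow$ reflexive transitive tableau model $\Rightarrow$ bulldozed PO model $\Rightarrow$ falsifying world for $\varphi$.
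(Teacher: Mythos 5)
Your proposal is correct and follows essentially the same route as the paper: contraposition, termination to get an open saturated branch, Lemma \ref{lemmodelexistS4As} for the reflexive transitive tableau model, then Lemma \ref{lembulldozingPO} and Lemma \ref{lembulldozingpreservePO} to bulldoze only the proper clusters into an anti-symmetric PO model falsifying $\varphi$. Your remark that Lemma \ref{lemclusternotnamedPO} (via $[\antisym]$) plays the role that $[\irr]$ played in the SPO case, restricted to proper clusters, matches the paper's discussion exactly.
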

\begin{proof}
    We show this theorem like Theorem \ref{thmcompleteI4} using Lemma \ref{lemmodelexistS4As}, Lemma \ref{lembulldozingPO}, and Lemma \ref{lembulldozingpreservePO}.
\end{proof}

\section{For Total Order}
\label{sec:TO}

The difference between a partial order and a total order is whether the binary relation is total. Therefore, by adding the rule $[\total]$ in Figure \ref{figrulesadd}, which corresponds to totality, to $\tableauPO$, we construct a tableau calculus corresponding to the class of TO models.

\begin{definition}
    The tableau calculus $\tableauTO$ is constructed by adding the rule $[\total]$ to $\tableauPO$.
\end{definition}

The rule $[\total]$ adds only accessibility formulas to a branch. Therefore, this rule has no effect on $T^\Theta(i)$ as defined in Definition \ref{defTPO}, nor on $G^\Theta$ as defined in Lemma \ref{lemG}. For this reason, the termination property can be proved by applying the preceding arguments directly.

\begin{theorem}
    The tableau calculus $\tableauTO$ has the termination property.
    \label{thm:termTO}
\end{theorem}

To prove completeness, we must establish two facts, as in Section \ref{secUSPO}.
First, a model generated from an open saturated branch automatically possesses totality.
Second, bulldozing preserves totality.

\begin{lemma}
    If $\Theta$ is an open saturated branch of a tableau in $\tableauTO$, then the model $\Kmodel^\Theta$ defined in \ref{defPOtableaumodel} is total.
    \label{lem:branch-model-is-total}
\end{lemma}
\begin{proof}
    Take any $i, j \in W^\Theta$.
    Since they are identity urfathers, we have $v_\Theta(i) = i$ and $v_\Theta(j) = j$ by Lemma \ref{lempropiur} (\ref{lemiurid}). 
    Since $\Theta$ is saturated, either $@_i \Dia j \in \Theta$ or $@_j \Dia i \in \Theta$ holds for $[\total]$.
    Therefore, we have either $i R^\Theta j$ or $j R^\Theta i$.
\end{proof}

\begin{lemma}
    If $\Kmodel$ is total, then so is the bulldozed $\Kmodel_{B'}$.
    \label{lem:bulldoze-preserves-totality}
\end{lemma}
\begin{proof}
    Take any $w, v \in \Kmodel_{B'}$.
    If they do not belong to the same cluster $C'_n$, either $w R_{B'} v$ or $v R_{B'} w$ follows from the totality of $\Kmodel$; we have either $\alpha'(w) R \alpha'(v)$ or $\alpha'(v) R \alpha'(w)$.
    If there is a cluster $C'_n$ such that $w, v \in C'_n$, either $w R_{B'} v$ or $v R_{B'} w$ follows from the definition of $R_{B'}$.
\end{proof}

\begin{theorem}
    The tableau calculus $\tableauTO$ is complete for the class of all TO models.
    \label{thm:completeTO}
\end{theorem}

Although we omit the detailed proof, we are now able to handle the class of STO models using the tableau calculus. Let us define the tableau calculus $\tableauIFT$ by adding the rule $[\trich]$ in Figure \ref{figrulesadd} to $\tableauIF$. Then, following the same line of argument as in this section, we can show that this tableau calculus possesses the termination property and is both sound and complete with respect to the class of STO models.

\section{Conclusion and Future Work}
\label{secfuture}

In this paper, we constructed tableau calculi $\tableauIF$, $\tableauIFD$, $ \tableauPO$, $\tableauIFT$, and $\tableauTO$ for hybrid logic and demonstrated their completeness and termination properties. Now, we have tableau calculi for models where possible worlds form some types of order: strict partial order, unbounded strict partial order, partial order, strict total order, and total order, respectively.

One approachable work is to generalize the methods introduced here to the broader classes of models. In the five tableau calculi discussed in this paper, we extended the definition of $T^\Theta(i)$ to reconcile transitivity with other axioms in the completeness proof. This approach is likely to extend to formulas that do not contain $\Dia$ (referred to as \emph{$F$-free formulas} in \cite{bolander2009}). Specifically, the following axioms are of interest:
\begin{itemize}
    \item asymmetry: $@_i \Box \Box \neg i$
    \item intransitivity: $@_i (\Box \Box \neg j \lor \Box \neg j)$
    \item tree-likeness: $@_i (@_j \Box \neg i \lor @_k \Box \neg i \lor @_j k)$
    \item uniqueness: $@_i (\Box j \lor \Box \neg j)$.
\end{itemize}
Combining these with transitive or symmetric models would allow us to construct tableau calculi corresponding to various classes of models\footnote{It goes without saying that $[\trans]$ and intransitivity cannot coexist.}.

Another prospective project is the application to the proof of the finite model property.
By utilizing the completeness proof of the tableau presented here, we may be able to demonstrate the finite model property for hybrid logics corresponding to classes of models with ordered structures.

In the case of $\mathbf{I4}$, we demonstrate the completeness with respect to the class of SPO models using bulldozing.
This technique potentially allows a single cluster to generate an infinitely long path, so the finite model property is not guaranteed.
However, if we work on the hybrid logic with nominals and a satisfaction operator, the modal depth of any given formula $\varphi$ is finite.
Then, there seems to be no problem in cutting off an infinitely extending path at an appropriate point (though the exact timing needs careful investigation).

The finite model properties for these logics for ordered models have already been proven by \cite{blackburn1993} using filtration. 
However, if we can show the finite model property using the strategy indicated above, it would constitute a constructive alternative proof.

\section*{Acknowledgement}
I would like to thank Prof. Ryo Kashima and Leonardo Pacheco for their invaluable advice in writing this paper. This work was supported by JST SPRING, Japan Grant Number JPMJSP2106 and JPMJSP2180.
\newpage

\bibliographystyle{plain}
\bibliography{logic}

\end{document}